\documentclass[12pt]{article}
\usepackage{amsmath}
\usepackage{graphicx}
\usepackage{amsfonts}
\usepackage{verbatim}
\usepackage{latexsym}
\usepackage{dsfont}
\usepackage{color}
\usepackage[bf,SL,BF]{subfigure}

\usepackage{a4}
\usepackage{amsmath}
\allowdisplaybreaks[4]
\textheight 9.3in
\textwidth 6.3in
\oddsidemargin 0pt
\evensidemargin 0pt
\topmargin -0.6 in
\parskip 3pt
\parindent 0.3in

\bibliographystyle{plain}

\makeatletter
\@addtoreset{equation}{section}

\begin{document}

\newcommand{\dup}{\mathrm{d}}
\newcommand{\EE}{\mathbb{E}}
\newcommand{\PP}{\mathbb{P}}
\newcommand{\RR}{\mathbb{R}}
\newcommand{\SM}{\mathbb{S}}
\newcommand{\ZZ}{\mathbb{Z}}
\newcommand{\ind}{\mathbf{1}}
\newcommand{\LL}{\mathbb{L}}
\def\F{{\cal F}}
\def\G{{\cal G}}
\def\P{{\cal P}}
\newcommand\eup{\mathrm{e}}

\newtheorem{theorem}{Theorem}[section]
\newtheorem{lemma}[theorem]{Lemma}
\newtheorem{coro}[theorem]{Corollary}
\newtheorem{defn}[theorem]{Definition}
\newtheorem{assp}[theorem]{Assumption}
\newtheorem{cond}[theorem]{Condition}
\newtheorem{expl}[theorem]{Example}
\newtheorem{prop}[theorem]{Proposition}
\newtheorem{rmk}[theorem]{Remark}
\newtheorem{conj}[theorem]{Conjecture}

\newcommand\nat{\mathbb{N}}

\newcommand\tq{{\scriptstyle{3\over 4 }\scriptstyle}}
\newcommand\qua{{\scriptstyle{1\over 4 }\scriptstyle}}
\newcommand\hf{{\textstyle{1\over 2 }\displaystyle}}
\newcommand\hhf{{\scriptstyle{1\over 2 }\scriptstyle}}
\newcommand\hei{\tfrac{1}{8}}

\newcommand{\eproof}{\indent\vrule height6pt width4pt depth1pt\hfil\par\medbreak}

\def\a{\alpha}
\def\e{\varepsilon} \def\z{\zeta} \def\y{\eta} \def\o{\theta}
\def\vo{\vartheta} \def\k{\kappa} \def\l{\lambda} \def\m{\mu} \def\n{\nu}
\def\x{\xi}  \def\r{\rho} \def\s{\sigma}
\def\p{\phi} \def\f{\varphi}   \def\w{\omega}
\def\q{\surd} \def\i{\bot} \def\h{\forall} \def\j{\emptyset}

\def\be{\beta} \def\de{\delta} \def\up{\upsilon} \def\eq{\equiv}
\def\ve{\vee} \def\we{\wedge}

\def\D{\Delta} \def\O{\Theta} \def\L{\Lambda}
\def\X{\Xi} \def\Si{\Sigma} \def\W{\Omega}
\def\M{\partial} \def\N{\nabla} \def\Ex{\exists} \def\K{\times}
\def\V{\bigvee} \def\U{\bigwedge}

\def\1{\oslash} \def\2{\oplus} \def\3{\otimes} \def\4{\ominus}
\def\5{\circ} \def\6{\odot} \def\7{\backslash} \def\8{\infty}
\def\9{\bigcap} \def\0{\bigcup} \def\+{\pm} \def\-{\mp}
\def\la{\langle} \def\ra{\rangle}

\def\proof{\noindent{\it Proof. }}
\def\tl{\tilde}
\def\trace{\hbox{\rm trace}}
\def\diag{\hbox{\rm diag}}
\def\for{\quad\hbox{for }}
\def\refer{\hangindent=0.3in\hangafter=1}

\newcommand\wD{\widehat{\D}}
\newcommand{\ka}{\kappa_{10}}
\title{
\bf Semi-implicit Euler-Maruyama method for non-linear time-changed stochastic differential equations}

\author
{{\bf Chang-Song Deng}
\\
School of Mathematics and Statistics \\Wuhan University, Wuhan 430072, China.
\\
{\bf  Wei Liu\footnote{Corresponding author. Email: weiliu@shnu.edu.cn, lwbvb@hotmail.com}}
\\
Department of Mathematics \\Shanghai Normal University, Shanghai, 200234, China.
}

\date{}

\maketitle

\begin{abstract}
The semi-implicit Euler-Maruyama (EM) method is investigated to approximate a class of time-changed stochastic differential equations, whose drift coefficient can grow super-linearly and diffusion coefficient obeys the global Lipschitz condition. The strong convergence of the semi-implicit EM is proved and the convergence rate is discussed. When the Bernstein
function of the inverse subordinator (time-change)
is regularly varying at zero, we establish the mean square polynomial stability of the underlying equations. In addition, the numerical method is proved to be able to preserve
such an asymptotic property. Numerical simulations are presented to demonstrate the theoretical results.

\medskip \noindent
{\small\bf Key words}: time-changed stochastic differential equations, semi-implicit Euler-Maruyama method, strong convergence, mean square polynomial stability, subordinator.
\par \noindent
{\small\bf 2010 MSC2010}: 60H10, 65C30, 60J60.
\end{abstract}

\section{Introduction}

In this paper, we study the numerical approximations to a class
of time-changed stochastic differential equations (SDEs) which
are of the form
\begin{equation*}
\dup X(t) = f(E(t),X(t))\,\dup E(t) + g(E(t),X(t))\,\dup B(E(t)).
\end{equation*}
Here the coefficients $f$ and $g$ satisfy some regularity
conditions (to be specified in Section \ref{sec:MathPre}),
$B(t)$ represents a standard Brownian motion,
and $E(t)$ is an
independent time-change given by
an inverse subordinator. The rigorous mathematical definitions are postponed to Section \ref{sec:MathPre}.
\par
Since it is in general impossible to derive the
explicit solution to such SDEs, numerical approximations become extremely important when one applies them to model uncertain phenomenon in real life. This paper aims to construct a numerical method for these time-changed SDEs. The strong convergence with the convergence rate and the mean square stability of the numerical method are investigated.
\par
To our best knowledge, \cite{JK2016} is the first paper to study the finite time strong convergence of numerical methods for time-changed SDEs by directly discretizing the equations. In \cite{JK2016}, the authors used the duality principle established in \cite{Kobayashi2011} to construct the Euler-Maruyama (EM) method. In a very recent work \cite{JK2019}, the authors studied the EM method for a larger class of time-changed SDEs without the duality principle. However, both of these two works required the coefficients of the time-changed SDEs to satisfy the global Lipschitz condition. This requirement rules out many interesting SDEs like
\begin{equation*}
\dup X(t) = \left(X(t) - X^3(t)\right) \,\dup E(t)
+ X(t) \,\dup B(E(t)),
\end{equation*}
where some cubic term appears in the drift coefficient. Moreover, the EM is proved to be divergent to SDEs with super-linear growing coefficients \cite{HJK2011a}.
\par
To cope with such super-linearity, we propose the semi-implicit EM method to approximate the SDEs driven by time-changed
Brownian motions in this paper. It should be noted that the semi-implicit EM (also called the backward Euler method) have been studied for approximating different types of SDEs driven by
Brownian motions, see \cite{HMY2007a,Hu1996a,LMYY2018,LM2015,MS2013a,NS2014,Sch1997a,WW2010} and the references therein.
\par
Stabilities in different senses for SDEs driven
by time-changed Brownian motion have been discussed in \cite{Wu2016arXiv1}. See \cite{NN2017,NN2018} for related
results when the driven process is
a time-changed L\'evy process. As far as we know,
however, there is no result concerning the stability analysis for numerical methods for time-changed SDEs.
\par
In the three papers mentioned above, the global Lipschitz condition was required for the coefficients of the equations. In this paper, we study the the mean square stability of the underlying time-changed SDEs, where the global Lipschitz condition on the drift coefficients is not required. Then, we investigate the capability of the semi-implicit EM method to reproduce such a property under the similar condition.
\par
The main contributions of this paper are as follows.
\begin{itemize}
\item The semi-implicit EM method is proved to be convergent to a class of time-changed SDEs and the convergence rate is explicitly given.
\item We establish the mean square stability of the underlying time-changed SDEs. In addition, the numerical solution is proved to be able to preserve such a property.
\end{itemize}

The rest of this paper is organized as follows.
Section \ref{sec:MathPre} is devoted to some mathematical preliminaries for the time-changed SDEs to be considered in this paper, and some necessary lemmas. The strong convergence of the numerical method is proved in Subsection \ref{subsec:strongconvergence}, and the mean square stabilities of both underlying and numerical solutions are shown in Subsection \ref{subsec:stability}. In Section \ref{sec:numsim}, we present numerical simulations to demonstrate the
theoretical results derived
in Section \ref{sec:mainrst}.

\section{Preliminaries} \label{sec:MathPre}

Throughout this paper, unless otherwise specified, we will use the following notation.
Let $|\cdot|$ be the Euclidean norm in $\RR^{d}$ and $\la x,y \ra$ be the inner product of vectors $x,y\in \RR^{d}$. If A is a vector or matrix, its transpose is denoted by $A^T$. If A is a matrix, its trace norm is denoted by $|A|=\sqrt{trace(A^TA)}$. For two real numbers $u$ and $v$, we
use $u\we v=\min(u,v)$ and $u\vee v=\max(u,v)$.
\par
Moreover, let $(\Omega, \F, \PP)$ be a complete probability space with a filtration $\left\{\F_t\right\}_{t \ge 0}$ satisfying the usual conditions (that is, it is right continuous and increasing while $\F_0$ contains all $\PP$-null sets). Let $B(t)= (B_1(t), B_2(t), ..., B_m(t))^T$ be an $m$-dimensional
$\F_t$-adapted standard Brownian motion. Let $\EE$ denote the expectation under the probability measure $\PP$.
\par

Let $D(t)$ be an $\F_t$-adapted subordinator (without killing), i.e.\ a nondecreasing L\'{e}vy process on $[0,\infty)$ starting
at $D(0)=0$. The Laplace transform of $D(t)$ is of
the form
$$
    \EE\,\eup^{-rD(t)} = \eup^{-t \phi(r)},\quad r>0,\,t\geq0,
$$
where the characteristic (Laplace)
exponent $\phi:(0,\infty)\rightarrow(0,\infty)$ is a Bernstein
function with $\phi(0+):=\lim_{r\downarrow0}\phi(r)=0$,
i.e.\ a $C^\infty$-function such
that $(-1)^{n-1}\phi^{(n)}\geq0$ for all $n\in\nat$.
Every such $\phi$ has a unique L\'{e}vy--Khintchine representation
$$
    \phi(r)
    =\vartheta r+\int_{(0,\infty)}\left(1-\eup^{-rx}\right) \,\nu(\dup x),\quad r>0,
$$
where $\vartheta\geq0$ is the drift parameter and
$\nu$ is a L\'{e}vy measure on $(0,\infty)$
satisfying $\int_{(0,\infty)}(1\wedge x)
\,\nu(\dup x)<\infty$. We will focus on the case
that $t\mapsto D(t)$ is a.s.\ strictly increasing,
i.e.\ $\vartheta>0$ or $\nu(0,\infty)=\infty$;
obviously, this is also equivalent
to $\phi(\infty):=\lim_{r\rightarrow\infty}\phi(r)=\infty$.

\par
Let $E(t)$ be the (generalized, right-continuous)
inverse of $D(t)$, i.e.
\begin{equation*}
E(t) := \inf\{ s\geq0\,;\,D(s) > t \}, \quad t \geq 0.
\end{equation*}
We call $E(t)$ an inverse subordinator associated with the Bernstein function $\phi$. Note that $t\mapsto E(t)$
is a.s.\ continuous and nondecreasing.
\par
We always assume that $B(t)$ and $D(t)$ are
independent. The process $B(E(t))$ is called
a time-changed Brownian motion, which is trapped whenever $t\mapsto E(t)$ is constant. We remark that the jumps of $t\mapsto D(t)$ correspond to flat pieces
of $t\mapsto E(t)$. Due to these traps, the time-change slows down the original Brownian motion $B(t)$, and $B(E(t))$ is understood
as a subdiffusion in the
literature (cf.\ \cite{MS2004,UHMK2018}).

Consider the following time-changed SDE
\begin{equation}\label{tcSDE}
\dup X(t) = f(E(t),X(t))\,\dup E(t)
+ g(E(t),X(t))\,\dup B(E(t)), \quad t\in [0,T],
\end{equation}
with $\EE |X(0)|^\gamma < 0$ for any $\gamma \in (0,\infty)$, where $f:[0,\infty) \times \RR^d \rightarrow \RR^d $ and $g:[0,\infty) \times \RR^{d} \rightarrow \RR^{d\times m}$ are measurable coefficients. We will need the following assumptions
on the drift and diffusion coefficients.
\begin{assp}\label{assp:one-sidedf}
There exists a constant $K_1 > 0$ such that,
for all $t\geq0$ and $x,y\in\RR^d$,
\begin{equation*}
\left\la x - y, ~ f(t,x) - f(t,y) \right\ra \leq K_1
|x - y|^2.
\end{equation*}
\end{assp}

\begin{assp}\label{assp:fholdert}
There exist constants $K_2 >0$, $a \geq 2$ and $\gamma \in (0,2]$ such that, for all $t,s\geq0$
and $x\in\RR^d$,
\begin{equation*}
\left| f(t,x) - f(s,x) \right|^2 \leq K_2 \left( 1 + |x|^a \right) |t - s|^{\gamma}
\end{equation*}
and
\begin{equation*}
\left| g(t,x) - g(s,x) \right|^2 \leq K_2 \left( 1 + |x|^2 \right) |t - s|^{\gamma}.
\end{equation*}
\end{assp}

\begin{assp}\label{assp:polyf}
Assume that there exist constants $K_3 > 0$ and $b\geq 0$
such that, for all $t\geq0$ and $x,y\in\RR^d$,
\begin{equation*}
\left| f(t,x) - f(t,y) \right|^2 \leq K_3 \left( 1 + |x|^b + |y|^b \right) |x - y|^2
\end{equation*}
and
\begin{equation*}
\left| g(t,x) - g(t,y) \right|^2 \leq K_3  |x - y|^2.
\end{equation*}
\end{assp}

\begin{assp}\label{assp:Kcondfg}
Assume that there exist constant $p \geq 2$ and $K_4 > 0$ such that, for all $t\geq0$ and $x\in\RR^d$,
\begin{equation*}
\la x,~f(t,x) \ra + \frac{p-1}{2} |g(t,x)|^2 \leq K_4 (1 + |x|^2).
\end{equation*}
\end{assp}

By Assumption \ref{assp:polyf}, we can see that there exists a constant $K_5 > 0$ such that
\begin{equation}\label{assp:polyf2}
|f(t,x)|^2 \leq K_5 (1 + |x|^a)
\end{equation}
and
\begin{equation}\label{assp:polyf22}
|g(t,x)|^2 \leq K_5 (1 + |x|^2)
\end{equation}
for all $t\geq0$ and $x\in \RR^d$.

According to the duality principle in \cite{Kobayashi2011}, the time-changed SDE \eqref{tcSDE} and the classical SDE of It\^o type
\begin{equation}\label{SDE}
\dup Y(t) = f(t,Y(t))\,\dup t + g(t,Y(t))\,\dup B(t),~~~Y(0) = X(0),
\end{equation}
have a deep connection. The next lemma states such a relation more precisely, which is borrowed from Theorem 4.2 in \cite{Kobayashi2011}.
\begin{lemma}\label{lemma:equivxandy}
Suppose Assumptions \ref{assp:one-sidedf} to \ref{assp:polyf} hold. If $Y(t)$ is the unique solution to the SDE \eqref{SDE}, then the time-changed process $Y(E(t))$, which is an $\F_{E(t)}$-semimartingale, is the unique solution to the time-changed SDE \eqref{tcSDE}. On the other hand, if $X(t)$ is the unique solution to the time-changed SDE \eqref{tcSDE}, then the process $X(D(t))$, which is an $\F_t$-semimartingale, is the unique solution to the SDE \eqref{SDE}.
\end{lemma}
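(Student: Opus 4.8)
The plan is to exploit the duality between the inverse subordinator $E(t)$ and the subordinator $D(t)$, reducing both assertions to the change-of-variable identities that link integrals against $\dup s$ and $\dup B(s)$ to integrals against $\dup E(s)$ and $\dup B(E(s))$. The two building blocks I would isolate first are: (i) the ordinary Lebesgue--Stieltjes substitution formula, which is available because $t\mapsto E(t)$ is a.s.\ continuous and nondecreasing with $E(0)=0$; and (ii) the time-change formula for It\^o integrals, which is the substantive input and which I would quote from \cite{Kobayashi2011}. Combined with the inverse relation $E(D(t))=t$ (valid since $D$ is a.s.\ strictly increasing) and the fact that the jumps of $D$ correspond exactly to the flat pieces of $E$, these two formulas turn the lemma into a bookkeeping exercise.

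For the forward direction, suppose $Y$ solves \eqref{SDE} and set $X(t):=Y(E(t))$. I would compose the integral form of \eqref{SDE} with $E(t)$, writing
\[
Y(E(t)) = Y(0) + \int_0^{E(t)} f(s,Y(s))\,\dup s + \int_0^{E(t)} g(s,Y(s))\,\dup B(s),
\]
and then apply (i) to the drift and (ii) to the diffusion. The substitution $s=E(u)$ turns $\int_0^{E(t)} f(s,Y(s))\,\dup s$ into $\int_0^t f(E(u),Y(E(u)))\,\dup E(u)$, while the It\^o time-change turns $\int_0^{E(t)} g(s,Y(s))\,\dup B(s)$ into $\int_0^t g(E(u),Y(E(u)))\,\dup B(E(u))$; substituting $X(u)=Y(E(u))$ reproduces \eqref{tcSDE} exactly. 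That $X$ is an $\F_{E(t)}$-semimartingale follows from the same time-change, since $B(E(t))$ is a martingale for the time-changed filtration and $E(t)$ has finite variation.

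For the backward direction I would compose the integral form of \eqref{tcSDE} with $D(t)$ and set $Y(t):=X(D(t))$. Using $E(D(u))=u$, the substitution $s=D(u)$ collapses $\int_0^{D(t)} f(E(s),X(s))\,\dup E(s)$ to $\int_0^t f(u,X(D(u)))\,\dup u$; the care needed here is that $\dup E$ charges none of the flat stretches of $E$, i.e.\ none of the jump intervals of $D$, so the push-forward of $\dup E(s)$ under $s=D(u)$ is precisely $\dup u$. The inverse It\^o time-change simultaneously turns the diffusion integral into $\int_0^t g(u,X(D(u)))\,\dup B(u)$, yielding \eqref{SDE}. Uniqueness then transfers between the two descriptions: the maps $Y\mapsto Y(E(\cdot))$ and $X\mapsto X(D(\cdot))$ are mutually inverse (using $Y(E(D(t)))=Y(t)$ and, since $X$ is constant on the flat pieces of $E$, also $X(D(E(t)))=X(t)$), so a second solution of one equation would force a second solution of the other.

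I expect the main obstacle to be the rigorous justification of the It\^o time-change formula in step (ii): verifying that each $E(t)$ is a stopping time for the relevant filtration, that $B(E(t))$ remains a time-changed Brownian motion with $[B(E)]_t=E(t)$ through the independence of $B$ and $D$, and that the stochastic integrals on the two sides agree as processes rather than merely in law. This is precisely what is established in \cite{Kobayashi2011}. In practice, therefore, I would first check that Assumptions \ref{assp:one-sidedf}--\ref{assp:polyf} guarantee existence and pathwise uniqueness of the solution $Y$ to the classical SDE \eqref{SDE}—the one-sided Lipschitz bound together with the polynomial-growth local Lipschitz bound on $f$ and the global Lipschitz bound on $g$ suffice by standard theory—and then invoke Theorem 4.2 of \cite{Kobayashi2011} to transport that solution and its uniqueness to \eqref{tcSDE}.
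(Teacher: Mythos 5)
Your proposal is correct and ultimately rests on the same ground as the paper: the paper offers no independent proof of Lemma \ref{lemma:equivxandy} but simply imports it as Theorem 4.2 of \cite{Kobayashi2011}, which is exactly what your final paragraph does after checking that Assumptions \ref{assp:one-sidedf}--\ref{assp:polyf} give existence and pathwise uniqueness for \eqref{SDE}. The substitution/time-change sketch you give for the two directions is a reasonable outline of Kobayashi's argument, but it is not something the paper itself carries out.
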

\par
The plan to numerically approximate the time-changed SDE \eqref{tcSDE} in this paper is as follows. Firstly, we construct the numerical method for the SDE \eqref{SDE}. Secondly, we discretize the inverse subordinator $E(t)$. Then the combination of the numerical solution of the SDE \eqref{SDE} and the discretized inverse subordinator is used to approximate the solution to the time-changed SDE \eqref{tcSDE}.
\par
The semi-implicit EM method for \eqref{SDE} is defined as
\begin{equation}\label{siEM}
y_{i+1} = y_i + f(t_{i+1},y_{i+1})h + g(t_i,y_i)
\Delta B_i,\quad i\in\nat,
\end{equation}
with $y_0 = Y(0)$, where $\Delta B_i$ is the Brownian increment following the normal distribution with the mean 0 and the variance $h>0$ and $t_i = ih$.
\par
Note that under Assumption \ref{assp:one-sidedf}, the semi-implicit EM method \eqref{siEM}
is well defined for any $h \in (0,1/K_1)$ (see for example \cite{MS2013a}). To be more precisely, this means that given $y_i$ is known a unique $y_{i+1}$ can be found. Throughout the paper, we always assume $h \in (0,1/K_1)$.
\par
We also define the piecewise continuous numerical solution by $y(t):= y_i$ for $t \in [t_i,t_{i+1})$, $i\in\nat$.
\par
We follow the idea in \cite{GM2010} to approximate the
inverse subordinator $E(t)$ in a time interval $[0,T]$ for any given $T>0$. Firstly, we simulate the path of $D(t)$ by $D_h(t_i) = D_h(t_{i-1} )+ \Delta_i$ with $D_h(0) = 0$, where $\Delta_i$ is independently identically sequence with $\Delta_i = D(h)$ in distribution. The procedure is stopped when
\begin{equation*}
T \in [ D_h(t_{n}), D_h(t_{n+1})),
\end{equation*}
for some $n$. Then the approximate $E_h(t)$ to $E(t)$ is generated by
\begin{equation}\label{findEht}
E_h(t) = \big(\min\{n; D_h(t_n) > t\} - 1\big)h,
\end{equation}
for $t \in [0,T]$. It is easy to see
\begin{equation*}
E_h(t) = ih,\quad\text{when $t \in   \left[ D_h(t_{i}), D_h(t_{i+1})\right)$}.
\end{equation*}

The next lemma will be used as the approximation error of $E_h(t)$ to $E(t)$, whose proof can be found in \cite{JK2016,Mag2009}.
\begin{lemma}\label{Eterror}
For any $t>0$,
\begin{equation*}
E(t) - h \leq E_h(t) \leq E(t)~~~\text{a.s.}
\end{equation*}
\end{lemma}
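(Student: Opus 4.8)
The plan is to put the simulated process $D_h$ and the true subordinator $D$ on a common probability space via a natural coupling, and then to read off both inequalities directly from the monotonicity of $D$ and the definition of $E(t)$ as an infimum. The key preliminary observation is that, because $D$ is a L\'evy process, its increments over the consecutive intervals $[t_{i-1},t_i]$ are independent and identically distributed, each with the law of $D(h)$. Hence we may realise the approximating increments by taking $\Delta_i:=D(t_i)-D(t_{i-1})$, so that on the grid points the simulated path coincides exactly with the true one, $D_h(t_i)=D(t_i)$ for every $i$. The whole argument is then carried out on this single probability space, which is precisely what makes the claimed almost sure comparison meaningful.

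Next I would fix $t>0$ and set $n:=\min\{k:\,D_h(t_k)>t\}$, so that by \eqref{findEht} one has $E_h(t)=(n-1)h=t_{n-1}$. I first check $n$ is well defined: since $t>0$ we have $D_h(t_0)=0\leq t$, so $k=0$ fails the defining inequality and $n\geq1$; and since $D(t_k)\to\infty$ almost surely as $k\to\infty$ (the subordinator is strictly increasing, and $\EE\,\eup^{-rD(t)}=\eup^{-t\phi(r)}\to0$ forces $D(t)\to\infty$), the index set is almost surely nonempty, so $n<\infty$. By minimality of $n$ together with the coupling, $D(t_{n-1})=D_h(t_{n-1})\leq t$ and $D(t_n)=D_h(t_n)>t$.

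For the upper bound I would use $D(t_{n-1})\leq t$ and the monotonicity of $s\mapsto D(s)$: every $s\leq t_{n-1}$ satisfies $D(s)\leq D(t_{n-1})\leq t$, so the set $\{s:\,D(s)>t\}$ is contained in $(t_{n-1},\infty)$, whence $E(t)=\inf\{s:\,D(s)>t\}\geq t_{n-1}=E_h(t)$. For the lower bound I would use $D(t_n)>t$, which places $t_n$ itself in $\{s:\,D(s)>t\}$; taking the infimum gives $E(t)\leq t_n=t_{n-1}+h=E_h(t)+h$, that is $E(t)-h\leq E_h(t)$. Combining the two yields $E(t)-h\leq E_h(t)\leq E(t)$ almost surely.

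The computation is genuinely short, and the only delicate point is the coupling step: one must be careful that the almost sure inequality is interpreted on a single realisation where $D_h$ and $D$ agree on the grid, rather than merely in distribution. Once the identification $D_h(t_i)=D(t_i)$ is in force, both bounds are immediate from monotonicity and the infimum characterisation of the inverse subordinator; the strict inequality in the definition of $E(t)$, combined with $D$ being strictly increasing, also removes any ambiguity in the boundary case where $t$ coincides with some value $D(t_k)$.
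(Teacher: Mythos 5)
Your proof is correct, and it is essentially the standard argument: the paper itself gives no proof but defers to \cite{JK2016,Mag2009}, where precisely this coupling $D_h(t_i)=D(t_i)$ followed by monotonicity of $D$ and the infimum characterisation of $E(t)$ is used. Your explicit attention to the coupling step (without which the almost sure comparison would not even be well posed) and to the finiteness of $n$ is exactly the right care to take here.
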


The following lemma states that the inverse subordinator $E(t)$ is known to have the finite exponential moment, which was proved in \cite{JK2016,MOW2011}. Here, we give an alternative proof, which can, furthermore, provide an
explicit upper bound.
\begin{lemma}\label{expmonfinite}
For any $\delta>0$, there
exists $C=C(\delta)>0$ such that
\begin{equation*}
\EE\,\eup^{\delta E(t)}\leq \eup^{Ct}
\quad \text{for all $t\geq1$}.
\end{equation*}
\end{lemma}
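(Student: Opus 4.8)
The plan is to exploit the duality between the subordinator $D$ and its inverse $E$, combined with a Chernoff-type estimate furnished by the Laplace transform $\EE\,\eup^{-rD(s)}=\eup^{-s\phi(r)}$.

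First I would record the elementary inclusion $\{E(t)>s\}\subseteq\{D(s)\leq t\}$: if $E(t)=\inf\{u\geq0:D(u)>t\}>s$, then by definition $D(u)\leq t$ for every $u\leq s$, and in particular $D(s)\leq t$. Taking probabilities and applying the exponential Markov inequality with an arbitrary parameter $r>0$ then gives
\[
\PP(E(t)>s)\leq\PP(D(s)\leq t)=\PP\!\left(\eup^{-rD(s)}\geq\eup^{-rt}\right)\leq\eup^{rt}\,\EE\,\eup^{-rD(s)}=\eup^{rt-s\phi(r)}.
\]

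Next I would invoke the layer-cake identity $\EE\,\eup^{\delta E(t)}=1+\delta\int_0^\infty\eup^{\delta s}\,\PP(E(t)>s)\,\dup s$, valid since $E(t)\geq0$, and insert the tail bound above to obtain
\[
\EE\,\eup^{\delta E(t)}\leq1+\delta\,\eup^{rt}\int_0^\infty\eup^{(\delta-\phi(r))s}\,\dup s.
\]
Because the standing hypothesis ensures $\phi(\infty)=\infty$, I can fix once and for all some $r=r_\delta>0$ with $\phi(r_\delta)>\delta$; the integral then converges to $(\phi(r_\delta)-\delta)^{-1}$, whence, using $\eup^{r_\delta t}\geq1$,
\[
\EE\,\eup^{\delta E(t)}\leq1+\frac{\delta}{\phi(r_\delta)-\delta}\,\eup^{r_\delta t}\leq\left(1+\frac{\delta}{\phi(r_\delta)-\delta}\right)\eup^{r_\delta t}.
\]

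Finally, I would set $C:=r_\delta+\log\!\big(1+\tfrac{\delta}{\phi(r_\delta)-\delta}\big)$ and use the hypothesis $t\geq1$ to absorb the constant prefactor into the exponent, via $\log(1+\cdots)\leq t\log(1+\cdots)$, which yields $\EE\,\eup^{\delta E(t)}\leq\eup^{Ct}$ exactly as claimed. I expect no serious obstacle; the only points requiring care are the first inclusion $\{E(t)>s\}\subseteq\{D(s)\leq t\}$ and the availability of an $r_\delta$ with $\phi(r_\delta)>\delta$, the latter being precisely guaranteed by $\phi(\infty)=\infty$, i.e.\ by the assumed strict increase of $D$. The restriction $t\geq1$ is genuinely needed here, since for small $t$ the multiplicative constant cannot be absorbed; and the construction delivers the promised explicit value of $C$.
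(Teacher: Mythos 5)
Your proposal is correct and follows essentially the same route as the paper: the duality $\PP(E(t)>s)\leq\PP(D(s)\leq t)$, the layer-cake formula for $\EE\,\eup^{\delta E(t)}$, and an exponential Chebyshev bound via the Laplace exponent $\phi$. The only cosmetic difference is that the paper fixes the Chernoff parameter as $r=\phi^{-1}(2\delta)$ so the integral evaluates to $1/\delta$, whereas you take any $r_\delta$ with $\phi(r_\delta)>\delta$; both choices rest on the same hypothesis $\phi(\infty)=\infty$.
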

\begin{proof}
By the definition of $E(t)$, it is clear that
\begin{equation*}
\PP \left( E(t) \leq s \right) = \PP \left( D(s) \geq t \right),~~~t,s \geq 0.
\end{equation*}
Note that
\begin{align*}
\EE\,\eup^{\delta E(t)}&=
\int_0^\infty \PP \left(\eup^{\delta E(t)} > r \right)\,\dup r\\
&=1+\int_1^\infty \PP \left( E(t) >\frac1\delta\log r
\right)\,\dup r\\
&=1+\int_1^\infty \PP \left( D\left(\frac1\delta\log r\right)<t
\right)\,\dup r\\
&=1+\delta\int_0^\infty \PP \left( D(r) < t \right)
\eup^{\delta r}\,\dup r.
\end{align*}
Denote by $\phi^{-1}$ the inverse function
of $\phi$. By the Chebyshev inequality,
\begin{align*}
\PP \left( D(r) < t \right)
&=\PP \left(\eup^{-\phi^{-1}(2\delta) D(r)} > \eup^{-\phi^{-1}(2\delta) t} \right)\\
&\leq \eup^{\phi^{-1}(2\delta) t}\, \EE \,  \eup^{-\phi^{-1}(2\delta) D(r)} \\
&=\eup^{\phi^{-1}(2\delta) t}  \eup^{-r\phi(\phi^{-1}(2\delta))}\\
&=\eup^{\phi^{-1}(2\delta) t-2\delta r}.
\end{align*}
Thus, for all $t > 0$,
$$
    \EE\,\eup^{\delta E(t)}
    \leq1+\delta \eup^{\phi^{-1}(2\delta) t}
    \int_0^\infty \eup^{-\delta r}\,\dup r
    =1+\eup^{\phi^{-1}(2\delta) t},
$$
which immediately implies the assertion. \eproof
\end{proof}

The following result is taken from \cite[Theorem 4.1, p.\ 59]{M2008a}.

\begin{lemma}\label{lemma:momentbdonSDE}
Suppose that Assumptions \ref{assp:one-sidedf} to \ref{assp:Kcondfg} hold. Then the solution to \eqref{SDE} satisfies
\begin{equation*}
\EE |Y(t)|^p \leq 2^{\frac{p-2}{2}} \left(1 +  \EE |Y(0)|^p\right)\eup^{pK_4t}\quad \text{for all $t\geq0$}.
\end{equation*}
\end{lemma}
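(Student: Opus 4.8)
The plan is to apply It\^o's formula to the smooth Lyapunov-type function $V(x)=(1+|x|^2)^{p/2}$, to exploit the Khasminskii-type bound in Assumption \ref{assp:Kcondfg} to reduce everything to a linear differential inequality for $\EE V(Y(t))$, and finally to convert back to $\EE|Y(t)|^p$ by two elementary inequalities. I work with $(1+|x|^2)^{p/2}$ rather than $|x|^p$ because the former is $C^\infty$ on all of $\RR^d$ for every real $p\ge 2$, so no care is needed at the origin.

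First I would record the derivatives: $\nabla V(x)=p(1+|x|^2)^{p/2-1}x$, and for the diffusion coefficient $g=g(t,x)\in\RR^{d\times m}$ the second-order It\^o term is
\[
\tfrac12\,\trace\!\big(g^{T}(\nabla^2 V)\,g\big)=\tfrac{p}{2}(1+|x|^2)^{p/2-1}|g|^2+\tfrac{p(p-2)}{2}(1+|x|^2)^{p/2-2}|x^{T}g|^2 .
\]
Using the Cauchy--Schwarz bound $|x^{T}g|^2\le|x|^2|g|^2\le(1+|x|^2)|g|^2$ together with $p\ge2$ (so the last term is nonnegative), the two $|g|^2$-contributions collapse into the single coefficient $\tfrac{p-1}{2}|g|^2$, giving for the generator
\[
\mathcal{L}V(t,x)\le p(1+|x|^2)^{p/2-1}\Big(\la x,f(t,x)\ra+\tfrac{p-1}{2}|g(t,x)|^2\Big).
\]
Now Assumption \ref{assp:Kcondfg} bounds the bracket by $K_4(1+|x|^2)$, and the spare factor $(1+|x|^2)$ exactly cancels the $-1$ in the exponent, leaving the clean estimate $\mathcal{L}V(t,x)\le pK_4\,V(x)$.

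Next I would integrate, running the argument up to the stopping times $\tau_n:=\inf\{t\ge0:\,|Y(t)|\ge n\}$ so that the stochastic integral $\int\la\nabla V(Y(s)),g\,\dup B(s)\ra$ is a genuine martingale on $[0,t\we\tau_n]$ (its integrand is bounded there, since $|Y(s\we\tau_n)|\le n$ and $g$ has at most linear growth by \eqref{assp:polyf22}). Taking expectations kills the martingale term and, after the routine indicator manipulation, yields
\[
\EE V(Y(t\we\tau_n))\le \EE V(Y(0))+pK_4\int_0^t \EE V(Y(s\we\tau_n))\,\dup s .
\]
Gronwall's inequality gives $\EE V(Y(t\we\tau_n))\le \EE V(Y(0))\,\eup^{pK_4 t}$ uniformly in $n$; letting $n\to\infty$ and invoking Fatou's lemma (with $\tau_n\to\infty$ a.s.\ and the a.s.\ continuity of $Y$) produces $\EE(1+|Y(t)|^2)^{p/2}\le \EE(1+|Y(0)|^2)^{p/2}\,\eup^{pK_4 t}$. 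Finally I would convert: on the left $|Y(t)|^p\le(1+|Y(t)|^2)^{p/2}$, while on the right the convexity inequality $(1+u)^{p/2}\le 2^{(p-2)/2}(1+u^{p/2})$ (valid for $p/2\ge1$, $u\ge0$) applied with $u=|Y(0)|^2$ gives $\EE(1+|Y(0)|^2)^{p/2}\le 2^{(p-2)/2}(1+\EE|Y(0)|^p)$, which together deliver precisely the asserted bound of Lemma \ref{lemma:momentbdonSDE}.

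The differentiations and the two closing inequalities are purely mechanical; the only genuinely delicate point is the localization. Because Assumptions \ref{assp:one-sidedf}--\ref{assp:polyf} permit super-linear growth of $f$, one cannot assume a priori that $Y$ has finite $p$-th moments, so the zero-mean property of the martingale term must be earned through the stopping times $\tau_n$ and then transferred to the unstopped process via $\tau_n\to\infty$ a.s.\ and Fatou, rather than argued directly in $L^2$.
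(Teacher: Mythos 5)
Your proof is correct; the paper does not prove this lemma at all but simply quotes it from \cite[Theorem 4.1, p.~59]{M2008a}, and your argument --- It\^o's formula for $(1+|x|^2)^{p/2}$, collapsing the second-order term into the $\frac{p-1}{2}|g|^2$ coefficient so that Assumption \ref{assp:Kcondfg} yields $\mathcal{L}V\leq pK_4V$, localization by $\tau_n$, Gronwall and Fatou, and the two closing elementary inequalities that produce the factor $2^{(p-2)/2}$ --- is precisely the standard proof of that cited result. Nothing is missing, and your remark that the localization is the only delicate point (since $f$ may grow super-linearly) is well taken.
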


The next lemma is easy; for the sake of completeness and our readers' convenience, we give a brief proof.

\begin{lemma}\label{lemma:holdery}
Suppose that Assumptions \ref{assp:one-sidedf} to \ref{assp:Kcondfg} hold. Then for any $q\in(1,2p/a]$ and
$t,s\geq0$ with $|t - s|\leq1$,
\begin{equation*}
\EE |Y(t) - Y(s)|^q \leq C |t - s|^{q/2}\eup^{Ct},
\end{equation*}
where $C>0$ is a constant independent of $t$ and $s$.
\end{lemma}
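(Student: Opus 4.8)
The plan is to fix $s\le t$ without loss of generality (the case $t<s$ following by symmetry, since $|t-s|\le1$ lets one absorb a factor $\eup^{C}$ into the constant) and to start from the integral form of the increment,
\[
Y(t)-Y(s)=\int_s^t f(r,Y(r))\,\dup r+\int_s^t g(r,Y(r))\,\dup B(r).
\]
By the elementary inequality $|u+v|^q\le 2^{q-1}(|u|^q+|v|^q)$, valid since $q>1$, it then suffices to bound the drift and diffusion contributions separately. Throughout I would use the growth bounds \eqref{assp:polyf2} and \eqref{assp:polyf22} together with the moment estimate of Lemma \ref{lemma:momentbdonSDE}. The role of the restriction $q\le 2p/a$ is exactly to guarantee that the moment $\EE|Y(r)|^{aq/2}$ produced by the super-linear drift is controlled, since then $aq/2\le p$.

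For the drift term I would apply H\"older's inequality in the time variable,
\[
\left|\int_s^t f(r,Y(r))\,\dup r\right|^q\le |t-s|^{q-1}\int_s^t |f(r,Y(r))|^q\,\dup r,
\]
take expectations, and use \eqref{assp:polyf2} to write $|f(r,Y(r))|^q\le K_5^{q/2}(1+|Y(r)|^a)^{q/2}\le C(1+|Y(r)|^{aq/2})$, where the last step is subadditivity of $x\mapsto x^{q/2}$ when $1<q<2$ and the bound $(u+v)^{q/2}\le 2^{q/2-1}(u^{q/2}+v^{q/2})$ when $q\ge2$. Since $aq/2\le p$, Lemma \ref{lemma:momentbdonSDE} combined with Lyapunov's inequality bounds $\EE|Y(r)|^{aq/2}$ by $C\eup^{Cr}$, so the drift contribution is at most $C|t-s|^{q}\eup^{Ct}\le C|t-s|^{q/2}\eup^{Ct}$, using $|t-s|\le1$.

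For the diffusion term I would invoke the Burkholder--Davis--Gundy inequality to bound $\EE|\int_s^t g(r,Y(r))\,\dup B(r)|^q$ by $C_q\,\EE(\int_s^t |g(r,Y(r))|^2\,\dup r)^{q/2}$, and then reduce the right-hand side to a time integral of moments of $g$. Here the argument bifurcates according to the sign of $q/2-1$: for $q\ge2$ one applies H\"older's inequality in time to extract a factor $|t-s|^{q/2-1}$ and is left with $\int_s^t\EE|g(r,Y(r))|^q\,\dup r$, whereas for $1<q<2$ one applies Jensen's inequality to the concave power $q/2$ of the outer expectation, reducing to $(\int_s^t\EE|g(r,Y(r))|^2\,\dup r)^{q/2}$. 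In either case the linear growth \eqref{assp:polyf22} together with $\EE|Y(r)|^{q\vee 2}\le C\eup^{Cr}$ (note $q\le 2p/a\le p$ since $a\ge2$) yields a diffusion contribution of order $C|t-s|^{q/2}\eup^{Ct}$. Combining the two estimates gives the claim.

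I expect the main obstacle to be bookkeeping rather than anything conceptual: one must track the exponent $aq/2$ generated by the super-linear drift and check that it never exceeds $p$, which is precisely what the hypothesis $q\le 2p/a$ secures, and one must treat the regimes $q\ge2$ and $1<q<2$ separately, since the direction of Jensen's inequality in the diffusion estimate — and hence whether the $|t-s|$ exponent emerges as exactly $q/2$ — depends on which regime one is in.
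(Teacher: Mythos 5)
Your proposal is correct and follows essentially the same route as the paper's proof: the integral form of the increment, the $2^{q-1}$ splitting, H\"older in time for the drift term, a Burkholder--Davis--Gundy/moment inequality for the stochastic integral, and then the growth bounds \eqref{assp:polyf2}--\eqref{assp:polyf22} together with Lemma \ref{lemma:momentbdonSDE}, with the hypothesis $q\leq 2p/a$ used exactly as you say to keep $aq/2\leq p$. The only difference is that you handle the regime $1<q<2$ separately via Jensen, whereas the paper applies Mao's moment inequality (stated for exponents at least $2$) uniformly, so your version is if anything slightly more careful.
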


\begin{proof}
For any $0\leq s < t$, we derive from \eqref{SDE} that
\begin{equation*}
Y(t) - Y(s) = \int_s^t f(r,Y(r))\,\dup r + \int_s^t g(r,Y(r))\,\dup B(r).
\end{equation*}
By the elementary inequality
\begin{equation}\label{crinequality}
    \left|\sum_{i=1}^nu_i\right|^q
    \leq n^{q-1}\sum_{i=1}^n|u_i|^q,
    \quad u_i\in\RR^d
\end{equation}
with $n=2$, the H\"older inequality
and \cite[Theorem 7.1, p.\ 39]{M2008a}, we get
\begin{align*}
\EE |Y(t) - Y(s)|^q \leq& |2(t-s)|^{q-1} \EE \int_s^t \left| f(r,Y(r)) \right|^q\,\dup r\\
&+ 2^{q/2 - 1} |q(q-1)|^{q/2} |t - s|^{q/2-1}\EE \int_s^t \left| g(r,Y(r)) \right|^q\,\dup r
\end{align*}
Combining this with \eqref{assp:polyf2},
\eqref{assp:polyf22} and Lemma \ref{lemma:momentbdonSDE},
we obtain
\begin{align*}
\EE |Y(t) - Y(s)|^q &\leq C\left( |t-s|^q + |t - s|^q \eup^{Ct} + |t - s|^{q/2} +  |t - s|^{q/2}\eup^{Ct}\right)\\
&\leq C |t - s|^{q/2}\eup^{Ct},
\end{align*}
where $C>0$ is a generic constant independent of $t$ and $s$ that may change from line to line. This completes
the proof.   \eproof
\end{proof}

\section{Main results} \label{sec:mainrst}

\subsection{Strong convergence} \label{subsec:strongconvergence}
Briefly speaking, the following theorem states the strong convergence with the rate of $(\gamma\wedge1)/2$ of the semi-implicit EM method, which is not surprising. But to our best knowledge, it seems that no existing result fulfills our needs in this paper. In Theorem \ref{thm:seEM}, we need to track the temporal variable $t$ as we will replace it by $E(t)$ in Theorem \ref{thm:timechangenum}. In addition, it seems that no such a result exists on the semi-implicit EM method for non-autonomous SDEs.
\begin{theorem}\label{thm:seEM}
Suppose that Assumptions \ref{assp:one-sidedf} to \ref{assp:Kcondfg} hold with $p \geq 2 (a\vee b)$ and the step size satisfies $h <1/(2(K_1+1))$. Then the semi-implicit EM method \eqref{siEM} is convergent
to \eqref{SDE} with
\begin{equation*}
\EE \left| Y(t) - y(t) \right|^2 \leq C h^{\gamma\wedge1}
\,\eup^{C t},\quad t\geq0,
\end{equation*}
where $C$ is a constant independent of $t$ and $h$.
\end{theorem}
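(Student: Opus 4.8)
The plan is to follow the standard route for implicit schemes under a one-sided Lipschitz drift: first secure moment bounds for the numerical solution that are uniform in $h$, then introduce a continuous-time interpolation of \eqref{siEM}, and finally run an It\^o/Gronwall argument on the squared error, using the one-sided Lipschitz Assumption \ref{assp:one-sidedf} to absorb the leading drift contribution and the polynomial Lipschitz Assumption \ref{assp:polyf} together with high moments to control the super-linear remainder. The step-size restriction $h<1/(2(K_1+1))$ is what keeps the coefficient of $|e(t)|^2$ manageable once the implicit term is absorbed.

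\textbf{Moment bounds.} First I would show there is a constant $C$, independent of $h$, with $\EE|y_i|^p\le C\eup^{Ct_i}$ for all $i$. Taking the inner product of \eqref{siEM} with $y_{i+1}$ and using $\la y_{i+1},y_i\ra=\tfrac12(|y_{i+1}|^2+|y_i|^2-|y_{i+1}-y_i|^2)$, the monotone Assumption \ref{assp:Kcondfg} turns the implicit drift term $\la y_{i+1},f(t_{i+1},y_{i+1})\ra h$ into a bound of the form $K_4(1+|y_{i+1}|^2)h$ together with a negative multiple of $|g|^2$; iterating and taking expectations (the martingale contribution of $g(t_i,y_i)\Delta B_i$ vanishing) yields the second-moment estimate, and applying the discrete It\^o/Burkholder estimates to $|y_{i+1}|^p$ gives the $p$-th moment bound. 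The hypothesis $p\ge 2(a\vee b)$ is reserved for the error analysis.

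\textbf{Interpolation and its regularity.} Define a continuous interpolation $\bar y$ of the scheme with $\bar y(t_i)=y_i$ and
\begin{equation*}
\bar y(t)=y_0+\int_0^t f(\bar s,\bar y(\bar s))\,\dup s+\int_0^t g(\underline s,y(s))\,\dup B(s),
\end{equation*}
where for $s\in[t_i,t_{i+1})$ the points $\bar s=t_{i+1}$ and $\underline s=t_i$ match the implicit and explicit evaluation points of \eqref{siEM}, so that $\bar y(t_{i+1})=y_{i+1}$. Using the moment bounds, \eqref{assp:polyf2}, \eqref{assp:polyf22} and the Burkholder--Davis--Gundy inequality, I would obtain a mean-square estimate $\EE|\bar y(t)-y(t)|^2\le Ch\,\eup^{Ct}$, exactly analogous to Lemma \ref{lemma:holdery}; this lets me pass between the continuous error and the piecewise-constant error at the end.

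\textbf{Error estimate and conclusion.} Setting $e(t)=Y(t)-\bar y(t)$, It\^o's formula gives
\begin{align*}
\dup|e(t)|^2={}&2\la e(t),\,f(t,Y(t))-f(\bar t,\bar y(\bar t))\ra\,\dup t\\
&+|g(t,Y(t))-g(\underline t,y(t))|^2\,\dup t+\dup M(t),
\end{align*}
with $M$ a local martingale. I would split
\begin{equation*}
f(t,Y(t))-f(\bar t,\bar y(\bar t))=\big[f(t,Y(t))-f(t,\bar y(t))\big]+\big[f(t,\bar y(t))-f(\bar t,\bar y(\bar t))\big],
\end{equation*}
pairing the first bracket with $e(t)$ and bounding it by $K_1|e(t)|^2$ via Assumption \ref{assp:one-sidedf}, while the second bracket (further split into its spatial and temporal parts) is handled by Assumption \ref{assp:polyf}, which contributes the factor $|\bar y(t)-\bar y(\bar t)|^2$ of order $h$ weighted by a polynomial $1+|\bar y(t)|^b+|\bar y(\bar t)|^b$, and by Assumption \ref{assp:fholdert}, which contributes $|t-\bar t|^\gamma\le h^\gamma$; the diffusion term is treated identically via the global Lipschitz bound for $g$ and its temporal H\"older bound. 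Taking expectations to remove $M$, using Young's and H\"older's inequalities with the $p$-th moment bound, one arrives at $\EE|e(t)|^2\le C\int_0^t\EE|e(s)|^2\,\dup s+Ch^{\gamma\wedge1}\eup^{Ct}$, and Gronwall's inequality together with the Step~2 estimate and \eqref{crinequality} yields the claimed bound on $\EE|Y(t)-y(t)|^2$. The main obstacle I anticipate is precisely this last step: because $f$ grows super-linearly, the remainder terms are not controlled by the increments of $\bar y$ alone, and one must carefully interleave the Young/H\"older splitting with the high moment bounds so that the polynomial weight $|\bar y|^b$ and the growth exponent $a$ are absorbed without degrading the $h^{\gamma\wedge1}$ rate --- this is exactly where the assumption $p\ge 2(a\vee b)$ is essential.
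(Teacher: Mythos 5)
Your overall strategy (continuous interpolation, It\^o's formula on the squared error, one-sided Lipschitz absorption, Gronwall) is a legitimate and standard route, but as written it contains a genuine gap: the interpolant $\bar y$ you define is \emph{not adapted}. For $t\in(t_i,t_{i+1})$ your drift integrand is $f(t_{i+1},y_{i+1})$, and $y_{i+1}$ depends on the increment $\Delta B_i=B(t_{i+1})-B(t_i)$, so $\bar y(t)$ (hence $e(t)=Y(t)-\bar y(t)$) is $\F_{t_{i+1}}$-measurable but not $\F_t$-measurable. Consequently the ``local martingale'' $M$ in your It\^o expansion is a stochastic integral with a non-adapted integrand: it is not an It\^o integral in the usual sense, and the step ``taking expectations to remove $M$'' is not justified. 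This is precisely the known obstruction to naively interpolating implicit schemes. The standard repairs are either (i) to introduce the adapted auxiliary sequence $z_i=y_i-hf(t_i,y_i)$, whose natural interpolation \emph{is} adapted, and estimate $Y-z$ and $z-y$ separately, or (ii) to abandon the continuous interpolation and argue purely at grid points. Your proposal would also need the uniform $p$-th moment bounds for $y_i$ to be actually proved (your sketch via Assumption \ref{assp:Kcondfg} is plausible but is real additional work for an implicit scheme), since your splitting puts the polynomial weights $1+|\bar y(t)|^b+|\bar y(\bar t)|^b$ and the fourth-moment increments of $\bar y$ on the numerical solution.

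The paper takes route (ii) and, moreover, arranges the decomposition so that \emph{no} moment bounds for the numerical solution are ever needed. Writing $Y(t_{i+1})-y_{i+1}$ from the integral identity and squaring, the drift contribution is split as $f(s,Y(s))-f(t_{i+1},y_{i+1})=[f(t_{i+1},Y(t_{i+1}))-f(t_{i+1},y_{i+1})]+[f(s,Y(t_{i+1}))-f(t_{i+1},Y(t_{i+1}))]+[f(s,Y(s))-f(s,Y(t_{i+1}))]$: the first bracket is paired with $Y(t_{i+1})-y_{i+1}$ and killed by Assumption \ref{assp:one-sidedf}, while the second and third brackets involve only the exact solution $Y$, so all super-linear terms are controlled by Lemmas \ref{lemma:momentbdonSDE} and \ref{lemma:holdery} alone (this is where $p\geq2(a\vee b)$ enters --- on $Y$, not on $y_i$); the diffusion part needs only the global Lipschitz bound on $g$. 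A discrete Gronwall argument then closes the recursion, and the extension from grid points to all $t$ uses Lemma \ref{lemma:holdery}. If you want to keep your continuous-time It\^o argument, you must fix the adaptedness issue and supply the numerical moment bounds; otherwise the paper's fully discrete decomposition is the more economical path.
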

\begin{proof}
From \eqref{SDE} and \eqref{siEM}, it holds that
for $i=1,2,\ldots$,
\begin{align*}
Y(t_{i+1}) - y_{i+1} =& \left( Y(t_i) - y_i \right) + \int_{t_i}^{t_{i+1}} \left( f(s,Y(s)) - f(t_{i+1},y_{i+1}) \right)\,\dup s\\
&+ \int_{t_i}^{t_{i+1}} \left( g(s,Y(s)) - g(t_i,y_i) \right)\,\dup B(s).
\end{align*}
Taking square on both sides yields
\begin{equation*}
\left| Y(t_{i+1}) - y_{i+1} \right|^2 = I_1 + I_2,
\end{equation*}
where
\begin{align*}
I_1&:= \left\la Y(t_{i+1}) - y_{i+1}, ~\int_{t_i}^{t_{i+1}} \left( f(s,Y(s)) - f(t_{i+1},y_{i+1}) \right)
\,\dup s\right\ra \\
&=\int_{t_i}^{t_{i+1}} \left\la Y(t_{i+1}) - y_{i+1}, ~  f(s,Y(s)) - f(t_{i+1},y_{i+1}) \right\ra \,\dup s
\end{align*}
and
\begin{equation*}
I_2 := \left\la Y(t_{i+1}) - y_{i+1}, ~ \left( Y(t_i) - y_i \right) +  \int_{t_i}^{t_{i+1}} \left( g(s,Y(s)) - g(t_i,y_i) \right)\,\dup B(s) \right\ra.
\end{equation*}

To estimate $I_1$, we rewrite the integrand of $I_1$ into three parts
\begin{align*}
&\left\la Y(t_{i+1}) - y_{i+1}, ~  f(s,Y(s)) - f(t_{i+1},y_{i+1}) \right\ra \\
&=\left\la Y(t_{i+1}) - y_{i+1}, ~  f(t_{i+1},Y(t_{i+1})) - f(t_{i+1},y_{i+1}) \right\ra \\
&\quad+ \left\la Y(t_{i+1}) - y_{i+1}, ~  f(s,Y(t_{i+1})) - f(t_{i+1},Y(t_{i+1})) \right\ra \\
&\quad+ \left\la Y(t_{i+1}) - y_{i+1}, ~  f(s,Y(s)) - f(s,Y(t_{i+1})) \right\ra \\ \nonumber
&=:I_{11} + I_{12} + I_{13}.
\end{align*}
Using Assumption \ref{assp:one-sidedf}, we obtain
\begin{equation*}
I_{11} \leq K_1 \left| Y(t_{i+1}) - y_{i+1} \right|^2.
\end{equation*}
Applying the elementary inequality
\begin{equation}\label{hgf65fds}
    \la u,v\ra\leq\frac{|u|^2+|v|^2}{2},
    \quad u,v\in\RR^d,
\end{equation}
we have
\begin{equation*}
I_{12} \leq \frac{1}{2} \left| Y(t_{i+1}) - y_{i+1} \right|^2 + \frac{1}{2}  \left| f(s,Y(t_{i+1})) - f(t_{i+1},Y(t_{i+1})) \right|^2.
\end{equation*}
By Assumption \ref{assp:fholdert}, we can see
\begin{equation*}
\left| f(s,Y(t_{i+1})) - f(t_{i+1},Y(t_{i+1})) \right|^2 \leq K_2 \left( 1 + |Y(t_{i+1})|^a \right) |s - t_{i+1}|^{\gamma}.
\end{equation*}
Thus,
\begin{equation*}
I_{12} \leq \frac{1}{2} \left| Y(t_{i+1}) - y_{i+1} \right|^2 + \frac{K_2}{2} \left( 1 + |Y(t_{i+1})|^a \right) |s - t_{i+1}|^{\gamma}.
\end{equation*}
Applying the elementary inequality \eqref{hgf65fds} and Assumption \ref{assp:polyf} gives
\begin{align*}
I_{13} &\leq \frac{1}{2} \left| Y(t_{i+1}) - y_{i+1} \right|^2 + \frac{1}{2} \left| f(s,Y(s)) - f(s,Y(t_{i+1})) \right|^2 \\
&\leq \frac{1}{2} \left| Y(t_{i+1}) - y_{i+1} \right|^2 + \frac{K_3}{2} \left( 1 + |Y(s)|^b + |Y(t_{i+1})|^b \right) \left|Y(s) - Y(t_{i+1}) \right|^2.
\end{align*}
Combining the upper bound estimates of $I_{11}$, $I_{12}$ and $I_{13}$, we conclude that
\begin{gather}\label{esI_1}
\begin{aligned}
I_1 &\leq \int_{t_i}^{t_{i+1}} \bigg(  (K_1 + 1) \left| Y(t_{i+1}) - y_{i+1} \right|^2 + \frac{K_2}{2} \left( 1 + |Y(t_{i+1})|^a \right) |s - t_{i+1}|^{\gamma}\\
&\quad+ \frac{K_3}{2} \left( 1 + |Y(s)|^b + |Y(t_{i+1})|^b \right) \left|Y(s) - Y(t_{i+1}) \right|^2 \bigg)\,\dup s.
\end{aligned}
\end{gather}
By the H\"older inequality, we find
\begin{align*}
&\EE \bigg( \left( 1 + |Y(s)|^b + |Y(t_{i+1})|^b  \right)\left|Y(s) - Y(t_{i+1}) \right|^2 \bigg)\\
&\qquad\qquad\leq\left(\EE \left( 1 + |Y(s)|^b + |Y(t_{i+1})|^b  \right)^2\right)^{1/2} \left(\EE \left|Y(s) - Y(t_{i+1}) \right|^4 \right)^{1/2}.
\end{align*}
Taking expectations on both sides of \eqref{esI_1} and applying Lemmas \ref{lemma:momentbdonSDE} and \ref{lemma:holdery}, we obtain
\begin{gather}\label{estI1final}
\begin{aligned}
\EE I_1 &\leq (K_1 + 1)h \EE \left| Y(t_{i+1}) - y_{i+1} \right|^2 + C h^{\gamma + 1} + C h^{\gamma + 1}\,\eup^{C t_{i+1}} + C h^2\,\eup^{Ct_{i+1}}\\
&\leq (K_1 + 1) h \EE \left| Y(t_{i+1}) - y_{i+1} \right|^2 + C h^{\gamma + 1}\,\eup^{Ct_{i+1}},
\end{aligned}
\end{gather}
where (and in what follows) $C$ is a generic constant independent of $i$ and the step size $h$
that may change from line to line.
\par
Next, we bound $I_2$. Applying the elementary inequality \eqref{hgf65fds} again, we have
\begin{align*}
I_2 &\leq \frac{1}{2} \left| Y(t_{i+1}) - y_{i+1} \right|^2 + \frac{1}{2} \left|  \left( Y(t_i) - y_i \right) +  \int_{t_i}^{t_{i+1}} \left( g(s,Y(s)) - g(t_i,y_i) \right)\,\dup B(s)  \right|^2 \\
&=:\frac{1}{2} \left| Y(t_{i+1}) - y_{i+1} \right|^2 + \frac{1}{2} I_{21}.
\end{align*}
Taking expectation on both sides and using
the It\^o isometry, it follows that
\begin{equation*}
\EE I_{21} \leq \EE |Y(t_i) - y_i|^2 +  \EE \int_{t_i}^{t_{i+1}} \left| g(s,Y(s)) - g(t_i,y_i) \right|^2\,\dup s.
\end{equation*}
Rewriting the integrand of the second term on the right hand side, and using the elementary inequality \eqref{crinequality}
with $n=3$ and $q=2$ and Assumptions \ref{assp:fholdert} and \ref{assp:polyf}, we can see
\begin{align*}
\left| g(s,Y(s)) - g(t_i,y_i) \right|^2 &\leq 3 \bigg( \left| g(s,Y(s)) - g(s,Y(t_i)) \right|^2 + \left| g(s,Y(t_i)) - g(t_i,Y(t_i)) \right|^2\\
&\quad+ \left| g(t_i,Y(t_i)) - g(t_i,y_i) \right|^2 \bigg) \\
&\leq 3 \left( K_3 |Y(s) - Y(t_i)|^2 + K_2 (1 + |Y(t_i)|^2)|s - t_i|^{\gamma} + K_3 |Y(t_i) - y_i|^2 \right).
\end{align*}
Now applying Lemmas \ref{lemma:momentbdonSDE} and \ref{lemma:holdery} gives
\begin{equation}\label{estI2final}
\EE I_2 \leq \frac{1}{2}\,\EE \left| Y(t_{i+1}) - y_{i+1} \right|^2 + \frac{1+3K_3 h}{2}\,
\left| Y(t_i) - y_i \right|^2 + C h^{(1+\gamma)\wedge2
}\,\eup^{C t_{i+1}}.
\end{equation}
Combining \eqref{estI1final} and \eqref{estI2final} yields
\begin{align*}
\EE \left| Y(t_{i+1}) - y_{i+1} \right|^2 \leq& \left(\frac{1}{2} + h(K_1 + 1)\right) \EE \left| Y(t_{i+1}) - y_{i+1} \right|^2 \\
&+ \frac{1+3K_3 h}{2}\,\EE \left| Y(t_i) - y_i \right|^2 + C h^{(1+\gamma)\wedge2
}\,\eup^{C t_{i+1}},
\end{align*}
which implies that
\begin{equation*}
\EE \left| Y(t_{i+1}) - y_{i+1} \right|^2 \leq \frac{1+3K_3h}
{1 - 2h(K_1 + 1)}\left(\EE \left| Y(t_i) - y_i \right|^2 +
Ch^{(1+\gamma)\wedge2}\,\eup^{C t_{i}}\right).
\end{equation*}
Now summing both sides from $0$ to $i-1$ yields
\begin{equation*}
\sum_{l=1}^i \EE \left| Y(t_{l}) - y_{l} \right|^2 \leq \frac{1+3K_3h}{1 - 2h(K_1 + 1)} \left(\sum_{l=0}^{i-1}\EE \left| Y(t_{l}) - y_{l} \right|^2 + iC h^{(1+\gamma)\wedge2}\,\eup^{C t_{i}}\right).
\end{equation*}
Due to the fact that $ih = t_i \leq \eup^{C t_{i}}$, from combining same terms together on both sides we can derive
\begin{equation*}
\EE \left| Y(t_i) - y_i \right|^2 \leq \frac{h(3K_3 + 2K_1 +2)}{1 - 2h(K_1 + 1)} \sum_{l=0}^{i-1} \EE \left| Y(t_{l}) - y_{l} \right|^2 + C h^{\gamma\wedge1
}\,\eup^{C t_{i}}.
\end{equation*}
By the discrete version of the Gronwall inequality, we have
\begin{equation}\label{ptwscov}
\EE \left| Y(t_i) - y_i \right|^2 \leq C h^{\gamma\wedge1}\,\eup^{C t_{i}}.
\end{equation}
Moveover, when $t \in [t_i,t_{i+1})$
for some $i=1,2,\ldots$, Lemma \ref{lemma:holdery} and \eqref{ptwscov} yield
\begin{align*}
\EE \left| Y(t) - y(t) \right|^2  &=\EE \left| Y(t) - y_i \right|^2\\
&\leq2 \EE \left| Y(t) - Y(t_i) \right|^2 + 2 \EE \left| Y(t_i) - y_i \right|^2 \\
&\leq Ch\,\eup^{Ct}+C h^{\gamma\wedge1}\,\eup^{Ct_i}\\
&\leq C h^{\gamma\wedge1}\,\eup^{C t}.
\end{align*}
Therefore, the proof is completed. \eproof
\end{proof}

\begin{theorem}\label{thm:timechangenum}
Suppose that Assumptions \ref{assp:one-sidedf} to \ref{assp:Kcondfg} hold with $p > 2 (a\vee b)$ and the step size satisfies $h <1/(2(K_1+1))$. Then the combination of the semi-implicit EM solution, $y(t)$, and the discretized inverse subordinator, $E_h(t)$, i.e. $y(E_h(t))$, converges strongly to the solution of \eqref{tcSDE} with
\begin{equation*}
\EE \left| X(T) - y(E_h(T))\right|^2 \leq C h^{\gamma\wedge1}\eup^{C T},
\end{equation*}
where $C$ is a constant independent of $T$ and $h$.
\end{theorem}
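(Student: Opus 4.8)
The plan is to reduce the assertion to the already-established convergence for the classical SDE \eqref{SDE} by combining the duality principle with the independence of the driving Brownian motion and the time-change. By Lemma \ref{lemma:equivxandy} the solution of \eqref{tcSDE} is $X(T)=Y(E(T))$, where $Y$ solves \eqref{SDE}, so I would first write $\EE|X(T)-y(E_h(T))|^2=\EE|Y(E(T))-y(E_h(T))|^2$ and, via $|u+v|^2\le 2|u|^2+2|v|^2$, split it into a \emph{temporal regularity} term $\EE|Y(E(T))-Y(E_h(T))|^2$ and a \emph{numerical error} term $\EE|Y(E_h(T))-y(E_h(T))|^2$.

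The structural observation driving the whole argument is that both $Y$ and the numerical process $y$ are functionals of the Brownian motion $B$ (and of $X(0)$), which is independent of the subordinator $D$ and hence of $E(T)$ and $E_h(T)$. This lets me condition on $\sigma(D)$ and \emph{freeze} the random times: setting $\Psi(t,s):=\EE|Y(t)-Y(s)|^2$ for deterministic $t,s$, one has $\EE[\,|Y(E(T))-Y(E_h(T))|^2 \mid \sigma(D)\,]=\Psi(E(T),E_h(T))$ almost surely. For the temporal regularity term, Lemma \ref{Eterror} gives $0\le E(T)-E_h(T)\le h<1$, so Lemma \ref{lemma:holdery} with $q=2$ (admissible since $p\ge a$) yields $\Psi(E(T),E_h(T))\le C\big(E(T)-E_h(T)\big)\eup^{CE(T)}\le Ch\,\eup^{CE(T)}$. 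Taking expectations and using the exponential-moment bound of Lemma \ref{expmonfinite} gives $\EE|Y(E(T))-Y(E_h(T))|^2\le Ch\,\eup^{CT}$.

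For the numerical error term I would proceed in exactly the same way. Since $E_h(T)$ is by construction an integer multiple of $h$, the value $y(E_h(T))$ coincides with the node value $y_i$ at $t_i=E_h(T)$, and the frozen conditional expectation is controlled by Theorem \ref{thm:seEM}: $\EE[\,|Y(E_h(T))-y(E_h(T))|^2 \mid \sigma(D)\,]\le Ch^{\gamma\we 1}\eup^{CE_h(T)}\le Ch^{\gamma\we 1}\eup^{CE(T)}$, where the last step uses $E_h(T)\le E(T)$ from Lemma \ref{Eterror}. Taking expectations and applying Lemma \ref{expmonfinite} once more gives $\EE|Y(E_h(T))-y(E_h(T))|^2\le Ch^{\gamma\we 1}\eup^{CT}$. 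Combining the two bounds and using $h\le h^{\gamma\we 1}$ for $h<1$ (since $\gamma\we 1\le 1$) delivers the claim $\EE|X(T)-y(E_h(T))|^2\le Ch^{\gamma\we 1}\eup^{CT}$.

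The step I expect to demand the most care is the rigorous justification of the freezing identity, namely that conditioning on the time-change converts the composed expectation into the deterministic-time function $\Psi$ evaluated at the random pair $(E(T),E_h(T))$. This rests entirely on the independence of $B$ and $D$ together with the $\sigma(D)$-measurability of $E(T)$ and $E_h(T)$; once it is granted, the remainder is bookkeeping with Lemmas \ref{lemma:momentbdonSDE}--\ref{lemma:holdery}, Lemma \ref{expmonfinite} and Theorem \ref{thm:seEM}. A secondary technical point is that every exponential factor of the form $\eup^{CE(T)}$ generated by the frozen lemmas must be integrable against the law of $E(T)$; this is precisely what Lemma \ref{expmonfinite} supplies for an arbitrary rate $\delta>0$, and the moment hypothesis $p>2(a\vee b)$ is what keeps the constants in the prior estimates finite so that this exponential trade-off closes.
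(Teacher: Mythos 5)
Your proposal is correct and follows essentially the same route as the paper: apply Lemma \ref{lemma:equivxandy} to write $X(T)=Y(E(T))$, split via $|u+v|^2\le 2|u|^2+2|v|^2$ into a temporal-regularity term handled by Lemmas \ref{Eterror}, \ref{expmonfinite} and \ref{lemma:holdery}, and a numerical-error term handled by Lemmas \ref{Eterror}, \ref{expmonfinite} and Theorem \ref{thm:seEM}. The only difference is that you make explicit the conditioning on $\sigma(D)$ that justifies evaluating the deterministic-time estimates at the random times $E(T)$ and $E_h(T)$ --- a step the paper leaves implicit --- which is a welcome clarification rather than a divergence.
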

\begin{proof}
By Lemma \ref{lemma:equivxandy} and
\eqref{crinequality} with $n=2$ and $q=2$,
\begin{align*}
\EE \left| X(T) - y(E_h(T))\right|^2 &= \EE \left| Y(E(T)) - y(E_h(T))\right|^2 \\
&\leq  2 \EE \left| Y(E(T)) - Y(E_h(T))\right|^2 + 2 \EE \left| Y(E_h(T) - y(E_h(T))\right|^2.
\end{align*}
By Lemmas \ref{Eterror}, \ref{expmonfinite} and \ref{lemma:holdery},  we can see
\begin{equation} \label{yyerror}
\EE \left| Y(E(T)) - Y(E_h(T))\right|^2 \leq C h\,\EE \,\eup^{C E(T)}\leq C h\eup^{C(T\vee1)}.
\end{equation}
On the other hand, it holds from Lemmas \ref{Eterror}
and \ref{expmonfinite} and Theorem \ref{thm:seEM} that
\begin{equation}\label{yYerror}
\EE \left| Y(E_h(T) - y(E_h(T))\right|^2 \leq C h^{\gamma\wedge1}\,\EE\,\eup^{C E_h(T)}
\leq C h^{\gamma\wedge1}\,\EE\,\eup^{C E(T)}
\leq C h^{\gamma\wedge1}\eup^{C(T\vee1)}.
\end{equation}
Combining \eqref{yyerror} and \eqref{yYerror}, we obtain the required assertion.  \eproof
\end{proof}

\subsection{Stability} \label{subsec:stability}

In the section, we always assume the existence and uniqueness of the solutions to \eqref{tcSDE} and \eqref{SDE}. In fact, Assumptions \ref{assp:one-sidedf} to \ref{assp:polyf} are sufficient to guarantee it, but we do not use them explicitly.

A function $F:(0,\infty)\rightarrow(0,\infty)$ is said
to be regularly varying at zero with
index $\alpha\in\RR$ if for any $c>0$,
$$
    \lim_{s\downarrow0}\frac{F(cs)}{F(s)}
    =c^\alpha.
$$
Denote by $\operatorname{RV}_\alpha$ the class of
all regularly varying functions at $0$.
A function $F\in\operatorname{RV}_0$
is said to be slowly varying at $0$. It is clear
that every $F\in\operatorname{RV}_\alpha$
can be rewritten as
$$
    F(s)=s^\alpha\ell(s),
$$
where $\ell$ is a slowly varying function at $0$.

In the following, we will assume that the Bernstein
function $\phi\in\operatorname{RV}_\alpha$ with
$\alpha\in(0,1)$. Typical examples are
\begin{itemize}
  \item
     Let $\phi(r)=r^\alpha\log^\beta(1+r)$
     with $0<\alpha<1$ and $0\leq\beta<1-\alpha$.
     Then $\phi\in\operatorname{RV}_{\alpha+\beta}$;

  \item
     Let $\phi(r)=r^\alpha\log^{-\beta}(1+r)$
     with $0<\beta<\alpha<1$.
     Then $\phi\in\operatorname{RV}_{\alpha-\beta}$;

  \item
     Let $\phi(r)=\log\left(1+r^\alpha\right)$
     with $0<\alpha<1$.
     Then $\phi\in\operatorname{RV}_{\alpha}$;

  \item
     Let $\phi(r)=r^\alpha(1+r)^{-\alpha}$
     with $0<\alpha<1$.
     Then $\phi\in\operatorname{RV}_{\alpha}$.
\end{itemize}
We refer the reader to \cite[Chapter 16]{SSV12} for
more examples of such Bernstein functions.

\begin{lemma}\label{asymptotic}
    If the Bernstein function
    $\phi\in\operatorname{RV}_{\alpha}$ with $\alpha\in(0,1)$, then for any $\lambda>0$
    $$
        \lim_{t\rightarrow\infty}
        \frac{\log\EE\,\eup^{-\lambda E(t)}}{\log t}
        =-\alpha.
    $$
\end{lemma}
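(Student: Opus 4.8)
The plan is to study $u_\lambda(t):=\EE\,\eup^{-\lambda E(t)}$ through its Laplace transform in the time variable $t$, since the hypothesis $\phi\in\operatorname{RV}_\alpha$ controls $\phi$, and hence the transform, near $s=0$, which is exactly the regime governing the limit $t\to\infty$. First I would combine the identity $\PP(E(t)\le x)=\PP(D(x)\ge t)$ recorded earlier with $\EE\,\eup^{-sD(x)}=\eup^{-x\phi(s)}$ to compute the double Laplace transform explicitly:
\begin{equation*}
\int_0^\infty \eup^{-st}u_\lambda(t)\,\dup t=\frac{\phi(s)}{s\,(\lambda+\phi(s))},\qquad s>0.
\end{equation*}
The derivation is routine bookkeeping: write $u_\lambda(t)=\lambda\int_0^\infty \eup^{-\lambda x}\PP(E(t)\le x)\,\dup x$, Laplace-transform in $t$, insert $\int_0^\infty \eup^{-st}\PP(E(t)\le x)\,\dup t=(1-\eup^{-x\phi(s)})/s$, and integrate in $x$.

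Next I would extract the small-$s$ behaviour. Since $\phi(0+)=0$, we have $\lambda+\phi(s)\to\lambda$ as $s\downarrow0$, so
\begin{equation*}
\int_0^\infty \eup^{-st}u_\lambda(t)\,\dup t\sim\frac{\phi(s)}{\lambda\,s}=\frac{1}{\lambda}\,s^{\alpha-1}\ell(s),\qquad s\downarrow0,
\end{equation*}
where $\phi(s)=s^\alpha\ell(s)$ with $\ell$ slowly varying at $0$. Rewriting the right-hand side as $s^{-(1-\alpha)}L(1/s)$ with $L$ slowly varying at infinity and $1-\alpha\in(0,1)$, Karamata's Tauberian theorem applies to the nonnegative measure $u_\lambda(t)\,\dup t$ and yields
\begin{equation*}
U(t):=\int_0^t u_\lambda(r)\,\dup r\sim\frac{1}{\Gamma(2-\alpha)}\,t^{1-\alpha}L(t),\qquad t\to\infty.
\end{equation*}

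Finally I would pass from the integral $U$ back to $u_\lambda$ itself. Because $t\mapsto E(t)$ is a.s.\ nondecreasing, $u_\lambda$ is nonincreasing; hence either the monotone density theorem gives $u_\lambda(t)\sim\Gamma(1-\alpha)^{-1}t^{-\alpha}L(t)$ directly, or, since only the logarithmic order is required, one can sandwich $u_\lambda(t)$ between $t^{-1}U(t)$ and $t^{-1}(U(2t)-U(t))$ using monotonicity. Taking logarithms and dividing by $\log t$ then gives $\log u_\lambda(t)/\log t\to-\alpha$, because $\log L(t)/\log t\to0$ for every slowly varying $L$. The \emph{main obstacle} is precisely this transfer step: Karamata's theorem controls only the integral $U$, so one must invoke monotonicity to recover the pointwise order of $u_\lambda$; the slowly varying factor $L$ is then harmless, which is the whole reason the statement is phrased at the level of the logarithmic rate rather than as a sharp asymptotic.
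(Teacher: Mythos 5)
Your proposal is correct and follows essentially the same route as the paper: compute the Laplace transform $\mathcal{L}_t[\EE\,\eup^{-\lambda E(t)}](s)=\phi(s)/(s[\phi(s)+\lambda])$, extract the small-$s$ asymptotics from $\phi\in\operatorname{RV}_\alpha$, apply Karamata's Tauberian theorem, and kill the slowly varying factor at the logarithmic level. The only cosmetic differences are that the paper cites the transform identity from \cite{JK2016} rather than rederiving it, and invokes the monotone-density form of the Tauberian theorem (\cite[Theorem 1.7.6]{BGT87}) in one step where you make the transfer from $U$ to $u_\lambda$ via monotonicity explicit.
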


\begin{proof}
    Denote by $\mathcal{L}_t[F(t)]$ the Laplace
    transform of a function $F(t)$.
    It follows from \cite[(3.10)]{JK2016} that
    for any $s>0$ and $\lambda>0$,
    $$
        \mathcal{L}_t\left[\EE\,\eup^{-\lambda E(t)}\right](s)
        =\frac{\phi(s)}{s[\phi(s)+\lambda]}.
    $$
    Since $\phi\in\operatorname{RV}_{\alpha}$, we get
    $$
        s\mathcal{L}_t\left[\EE\,\eup^{-\lambda E(t)}\right](s)
        =\frac{\phi(s)}{\phi(s)+\lambda}
        \,\,\sim\,\,\frac{\phi(s)}{\lambda}
        =\frac{1}{\lambda}\,s^\alpha\ell(s),
        \quad s\downarrow0,
    $$
    where $\ell$ is a slowly varying
    function at $0$. Combining this with Karamata's Tauberian
    theorem (cf.\ \cite[Theorem 1.7.6]{BGT87}), it holds that
    \begin{equation}\label{laplaceasy}
        \EE\,\eup^{-\lambda E(t)}\,\,\sim
        \,\,\frac{1}{\lambda\Gamma(1-\alpha)}
        \,t^{-\alpha}\ell\left(\frac{1}{t}\right),
        \quad t\rightarrow\infty.
    \end{equation}
    Noting that $t\mapsto\ell(1/t)$ is slowly
    varying at $\infty$,
    one has (see \cite[Proposition 1.3.6\,(i)]{BGT87})
    $$
        \lim_{t\rightarrow\infty}\frac{\ell(1/t)}{\log t}
        =0,
    $$
    which, together with \eqref{laplaceasy}, implies
    the desired limit.  \eproof
\end{proof}

\begin{theorem}\label{thm:tcSDEstability}
Assume that the Bernstein function $\phi\in\operatorname{RV}_{\alpha}$ with $\alpha\in(0,1)$, and that there exists a
constant $L_1 > 0$ such that
\begin{equation}\label{cond:stab}
\la x,~f(t,x) \ra + \frac{1}{2} |g(t,x)|^2 \leq
-L_1 |x|^2,\quad (x,t) \in \RR^d \times [0,\infty).
\end{equation}
Then
\begin{equation*}
\limsup_{t \rightarrow \infty} \frac{\log\EE |X(t)|^2}
{\log t} \leq -\alpha.
\end{equation*}
In other words, the solution to \eqref{tcSDE} is mean square polynomially stable.
\end{theorem}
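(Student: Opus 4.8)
The plan is to reduce the statement to the classical, non-time-changed SDE \eqref{SDE} via the duality principle of Lemma \ref{lemma:equivxandy}, establish mean square exponential stability there, and then transfer the decay rate through the independent time change using Lemma \ref{asymptotic}.

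First I would invoke Lemma \ref{lemma:equivxandy} to write $X(t)=Y(E(t))$, so that $\EE|X(t)|^2=\EE|Y(E(t))|^2$, where $Y$ solves \eqref{SDE}. The first substantive step is to show that $Y$ is mean square exponentially stable. Applying It\^o's formula to $|Y(t)|^2$ and taking expectations (the local martingale term is handled by a standard stopping-time localization followed by a passage to the limit), one obtains
\begin{equation*}
\frac{\dup}{\dup t}\,\EE|Y(t)|^2
=\EE\left(2\la Y(t),f(t,Y(t))\ra + |g(t,Y(t))|^2\right).
\end{equation*}
Condition \eqref{cond:stab} gives the pointwise bound $2\la x,f(t,x)\ra + |g(t,x)|^2\leq -2L_1|x|^2$, whence
\begin{equation*}
\frac{\dup}{\dup t}\,\EE|Y(t)|^2\leq -2L_1\,\EE|Y(t)|^2,
\end{equation*}
and Gronwall's inequality yields $\EE|Y(t)|^2\leq \EE|Y(0)|^2\,\eup^{-2L_1 t}$ for all $t\geq0$. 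Writing $\psi(s):=\EE|Y(s)|^2$, this is the clean deterministic bound $\psi(s)\leq \EE|Y(0)|^2\,\eup^{-2L_1 s}$.

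Next I would exploit the independence of $B$ and $D$. Since $Y$ is driven solely by $B$ while $E(t)$ is a functional of $D$, the entire solution process $\{Y(s)\}_{s\geq0}$ is independent of the random time $E(t)$. Conditioning on $E(t)$ therefore gives
\begin{equation*}
\EE|X(t)|^2=\EE|Y(E(t))|^2=\EE\big[\psi(E(t))\big]
\leq \EE|Y(0)|^2\,\EE\,\eup^{-2L_1 E(t)}.
\end{equation*}
Finally, applying Lemma \ref{asymptotic} with $\lambda=2L_1$ and taking logarithms (recall $\log t\to\infty$), one finds
\begin{equation*}
\limsup_{t\rightarrow\infty}\frac{\log\EE|X(t)|^2}{\log t}
\leq\lim_{t\rightarrow\infty}
\frac{\log\big(\EE|Y(0)|^2\big)+\log\EE\,\eup^{-2L_1 E(t)}}{\log t}
=-\alpha,
\end{equation*}
which is the assertion.

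The step I expect to be the main obstacle is the conditioning identity $\EE|Y(E(t))|^2=\EE[\psi(E(t))]$: it hinges on the fact that $\{Y(s)\}_{s\geq0}$, being measurable with respect to the Brownian filtration, is independent of $E(t)$, a functional of the independent subordinator $D$. Making this rigorous requires treating $Y$ as a random element of path space and invoking the standard conditioning lemma for independent random variables, after which the deterministic estimate $\psi(s)\leq \EE|Y(0)|^2\,\eup^{-2L_1 s}$ pulls the exponential decay inside the expectation against the law of $E(t)$. The remaining steps are routine once the exponential decay of $\psi$ and Lemma \ref{asymptotic} are in hand.
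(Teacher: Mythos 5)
Your proposal is correct and follows essentially the same route as the paper: reduce to the dual SDE \eqref{SDE} via Lemma \ref{lemma:equivxandy}, establish mean square exponential stability of $Y$, pass that bound through the independent time change, and conclude with Lemma \ref{asymptotic}. The only differences are cosmetic: you derive the exponential decay directly from It\^o's formula (getting rate $2L_1$) where the paper simply cites \cite[Theorem 4.4, p.\ 130]{M2008a} (stated with rate $L_1$), and you spell out the conditioning/independence argument behind $\EE|Y(E(t))|^2\leq \EE|Y(0)|^2\,\EE\,\eup^{-2L_1 E(t)}$, which the paper leaves implicit.
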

\begin{proof}
Given \eqref{cond:stab},
by \cite[Theorem 4.4, p.\ 130]{M2008a} we know
that the solution to \eqref{SDE} is mean
square exponentially stable
\begin{equation*}
\EE |Y(t)|^2 \leq \eup^{-L_1 t}\,\EE |Y(0)|^2.
\end{equation*}
Using Lemma \ref{lemma:equivxandy}, we obtain
\begin{equation*}
\EE |X(t)|^2 = \EE |Y(E(t))|^2 \leq \EE \, \eup^{-L_1 E(t)}
\,\cdot\EE |Y(0)|^2.
\end{equation*}
It remains to apply Lemma \ref{asymptotic} to
complete the proof. \eproof
\end{proof}

\begin{rmk}
It is interesting to observe that the time-changed SDEs \eqref{tcSDE} is polynomially stable while the dual SDEs \eqref{SDE} is stable in the exponential rate. This may be due to the effect of the time-changed Brownian, which slows down the diffusion.
\end{rmk}
Now, we present our result about the stability of the semi-implicit EM method.
\begin{theorem}\label{thm:numstability}
Assume that the Bernstein function $\phi\in\operatorname{RV}_{\alpha}$ with $\alpha\in(0,1)$, and that there exist positive constants $L_2$ and $L_3$ with $2L_2 > L_3$ such that
\begin{equation}\label{cond:numstab}
\la x,~f(t,x) \ra \leq - L_2 |x|^2~~~\text{and}~~~|g(t,x)|^2 \leq L_3 |x|^2, \quad (x,t) \in \RR^d \times [0,\infty).
\end{equation}
Then
\begin{equation*}
\limsup_{t \rightarrow\infty} \frac{\log\EE |Y(E_h(t))|^2 }
{\log t} \leq -\alpha.
\end{equation*}
That is to say, the numerical solution to \eqref{tcSDE} is mean square polynomially stable.
\end{theorem}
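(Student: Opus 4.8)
The plan is to parallel the proof of Theorem \ref{thm:tcSDEstability}, but with the continuous-time exponential stability of $Y$ replaced by a \emph{discrete} mean square exponential stability estimate for the semi-implicit iterates $\{y_i\}$ of \eqref{siEM}; the polynomial decay is then transferred through the inverse subordinator exactly as before, using Lemmas \ref{Eterror} and \ref{asymptotic}. Here $y(E_h(t))$ denotes the numerical solution to \eqref{tcSDE} as in Theorem \ref{thm:timechangenum}. Note first that \eqref{cond:numstab} implies \eqref{cond:stab} with $L_1 = L_2 - L_3/2 > 0$, so this is the genuinely numerical counterpart of the previous theorem.

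First I would show that \eqref{siEM} is mean square exponentially stable, namely that $\EE|y_i|^2\leq\mu^i\,\EE|y_0|^2$ for all $i\in\nat$ and some contraction factor $\mu\in(0,1)$. To this end I would move the implicit drift to the left, writing $y_{i+1}-h f(t_{i+1},y_{i+1})=y_i+g(t_i,y_i)\Delta B_i$, and take squared norms of both sides. The left-hand side equals $|y_{i+1}|^2-2h\la y_{i+1},f(t_{i+1},y_{i+1})\ra+h^2|f(t_{i+1},y_{i+1})|^2$; bounding $\la y_{i+1},f(t_{i+1},y_{i+1})\ra\leq-L_2|y_{i+1}|^2$ by the first part of \eqref{cond:numstab} and discarding the nonnegative term $h^2|f(t_{i+1},y_{i+1})|^2$ yields
$$(1+2hL_2)|y_{i+1}|^2\leq|y_i|^2+2\la y_i,g(t_i,y_i)\Delta B_i\ra+|g(t_i,y_i)\Delta B_i|^2.$$
Taking expectations, the cross term drops out since $\Delta B_i$ is independent of $\F_{t_i}$ and centred, while a direct computation using the independence of $\Delta B_i$ from $\F_{t_i}$ and its covariance $hI$ gives $\EE|g(t_i,y_i)\Delta B_i|^2=h\,\EE|g(t_i,y_i)|^2\leq hL_3\,\EE|y_i|^2$ by the second part of \eqref{cond:numstab}. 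Hence $\EE|y_{i+1}|^2\leq\mu\,\EE|y_i|^2$ with $\mu:=(1+hL_3)/(1+2hL_2)$, and the standing assumption $2L_2>L_3$ forces $\mu\in(0,1)$. Iterating and writing $\mu^i=\eup^{-\lambda t_i}$ with $\lambda:=-h^{-1}\log\mu>0$ gives the claimed geometric decay.

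Next I would transfer this to $y(E_h(t))$. Setting $N:=E_h(t)/h$, which is an $\nat$-valued random variable measurable with respect to $\sigma(D)$, we have $y(E_h(t))=y_N$. Because $B$ and $D$ are independent, the family $\{y_i\}$ is independent of $N$, so conditioning on $\sigma(D)$ and writing $\psi(i):=\EE|y_i|^2$ gives $\EE|y(E_h(t))|^2=\EE[\psi(N)]\leq\EE\big[\eup^{-\lambda E_h(t)}\big]\,\EE|y_0|^2$. Lemma \ref{Eterror} gives $E_h(t)\geq E(t)-h$, hence $\eup^{-\lambda E_h(t)}\leq\eup^{\lambda h}\eup^{-\lambda E(t)}$, so that $\EE|y(E_h(t))|^2\leq C\,\EE\,\eup^{-\lambda E(t)}$ for a constant $C$ independent of $t$. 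Taking logarithms, dividing by $\log t$ and letting $t\to\infty$, the constant term vanishes and Lemma \ref{asymptotic} applied with this $\lambda$ yields $\limsup_{t\to\infty}(\log t)^{-1}\log\EE|y(E_h(t))|^2\leq-\alpha$, which is the assertion.

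The crux of the argument is the first step, the discrete exponential stability of the implicit scheme. The subtlety is that the semi-implicit EM method evaluates the drift at the new time level $t_{i+1}$, so dissipativity must be applied to $y_{i+1}$, producing the factor $1+2hL_2$ in the \emph{denominator}, whereas the diffusion is explicit and contributes $1+hL_3$ in the \emph{numerator}; it is exactly the split form of \eqref{cond:numstab} into a separate drift condition and diffusion condition, together with $2L_2>L_3$, that makes these two contributions comparable and forces the one-step factor strictly below $1$ uniformly in the step size. Once this geometric rate is secured, the passage through $E_h$ and $E$ is routine and mirrors Theorem \ref{thm:tcSDEstability}.
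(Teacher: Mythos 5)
Your proposal is correct and follows essentially the same route as the paper: a discrete mean square exponential bound $\EE|y_i|^2\leq \eup^{-\lambda t_i}\,\EE|y_0|^2$ for the semi-implicit iterates, transferred to $y(E_h(t))$ via independence of $B$ and $D$, Lemma \ref{Eterror}, and Lemma \ref{asymptotic}. The only difference is that the paper simply cites the ``standard approach'' of \cite{MS2013a} for the one-step contraction (with rate $L_4=(2L_2-L_3)/(1+2L_2)$, consistent with your $\mu=(1+hL_3)/(1+2hL_2)$), whereas you derive it explicitly and also spell out the conditioning on $\sigma(D)$ that the paper leaves implicit.
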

\begin{proof}
Assume that \eqref{cond:numstab} holds with $2L_2 > L_3$, the standard approach (see for example \cite{MS2013a}) gives
\begin{equation*}
\EE |Y(t_i)|^2 \leq\eup^{-L_4 t_i}\,\EE|Y(0)|^2,
\end{equation*}
where $L_4 = (2L_2 - L_3)/(1 + 2 L_2)$. Now,
replacing $t_i$ by $E_h(t)$ and using
Lemma \ref{Eterror}, we have
\begin{equation*}
\EE |Y(E_h(t))|^2 \leq \EE|Y(0)|^2\,\cdot
\EE\,\eup^{-L_4 E_h(t)}
\leq \EE|Y(0)|^2\,\cdot \eup^{L_4h}\,\cdot
\EE\,\eup^{-L_4 E(t)}.
\end{equation*}
Now, the application of Lemma \ref{asymptotic} yields the desired assertion. \eproof
\end{proof}
\begin{rmk}
It is not hard to see that \eqref{cond:numstab} together with $2L_2 > L_3$ indicates \eqref{cond:stab} in Theorem \ref{thm:tcSDEstability}. Hence, it can be seen from Theorem \ref{thm:numstability} that the semi-implicit EM method can preserve the mean square polynomial stability of the underlying time-changed SDE.
\end{rmk}

\section{Numerical simulations}\label{sec:numsim}

In this section, we will present two numerical examples. The first example is used to illustrate the strong convergence as well as the convergence rate. The second example demonstrates the mean square stability of the numerical stability. Throughout this section, we focus on the case that $E(t)$ is
an inverse $0.9$-stable
subordinator with Bernstein function $\phi(r)=r^{0.9}$.
\begin{expl}
A one-dimensional nonlinear autonomous time-changed SDE
\begin{equation}\label{expl:strcon}
\dup X(t) = \left(X(t) - X^3(t)\right)\,\dup E(t) + X(t) \,\dup B(E(t)), \quad\text{with $X(0)=1$},
\end{equation}
is considered.
\end{expl}
It is not hard to check that Assumptions \ref{assp:one-sidedf} to \ref{assp:polyf} hold for \eqref{expl:strcon}. Therefore, by Theorem \ref{thm:timechangenum} the numerical solution proposed in this paper is strongly convergent to the underlying solution with the rate of $1/2$.
\par
For a given step size $h$, one path of the numerical solution to \eqref{expl:strcon} is simulated in the following way.
\par
\noindent
{\bf Step 1.} The semi-implicit EM method with the step size $h$ is used to simulate the numerical solution, $y(t) = y_i$, when $t \in [ih, (i+1)h)$ for $i=1,2,3,...$, to the duel SDE
\begin{equation*}
\dup Y(t) = \left(Y(t) - Y^3(t)\right)\,\dup t + Y(t)\,
\dup B(t), \quad\text{with $Y(0)=1$}.
\end{equation*}
\par
\noindent
{\bf Step 2.} One path of the subordinator $D(t)$ is simulated with the same step size $h$. (see for example \cite{JUMthesis2015}).
\par
\noindent
{\bf Step 3.} The $E_h(t)$ is found by using \eqref{findEht}.
\par
\noindent
{\bf Step 4.} The combination, $y(E_h(t))$, is used to approximate \eqref{expl:strcon}.
\par
One path of the $0.9$-stable subordinator $D(t)$ is plotted using $h^{-6}$ in Figure \ref{fig:pathDt}. The corresponding inverse subordinator $E(t)$ is drawn in Figure \ref{fig:pathEt}. One path of the numerical solution to Example \ref{expl:strcon} is displayed in Figure \ref{fig:pathxt}.
\par
\begin{figure}[htbp]
\centering
\subfigure[One path of $D(t)$]
{
  \begin{minipage}{4.5cm}
  \label{fig:pathDt}
  \centering
  \includegraphics[scale=0.36]{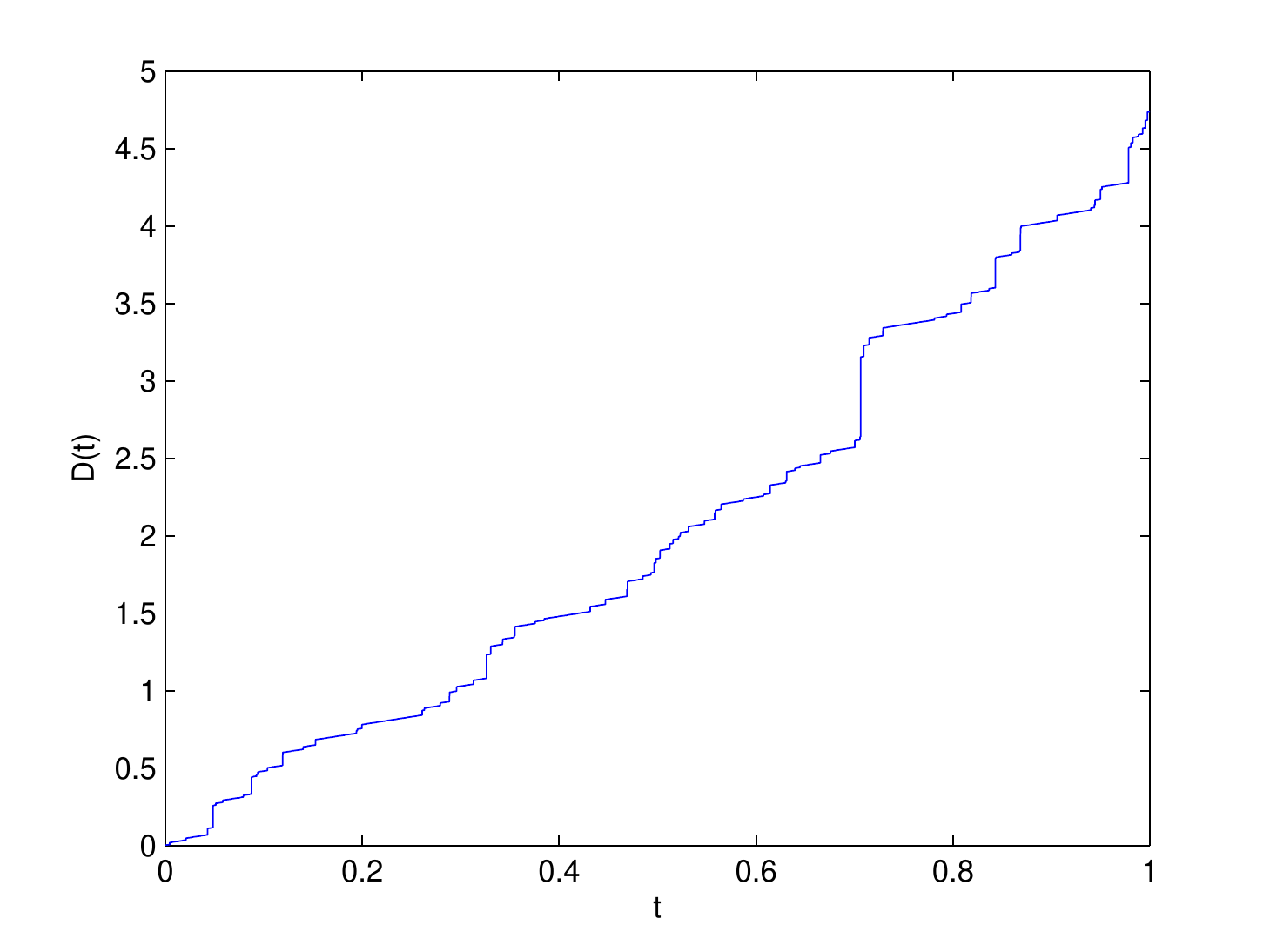}
  \end{minipage}
}
\subfigure[One path of $E(t)$]
{
  \begin{minipage}{4.5cm}
  \label{fig:pathEt}
  \centering
  \includegraphics[scale=0.36]{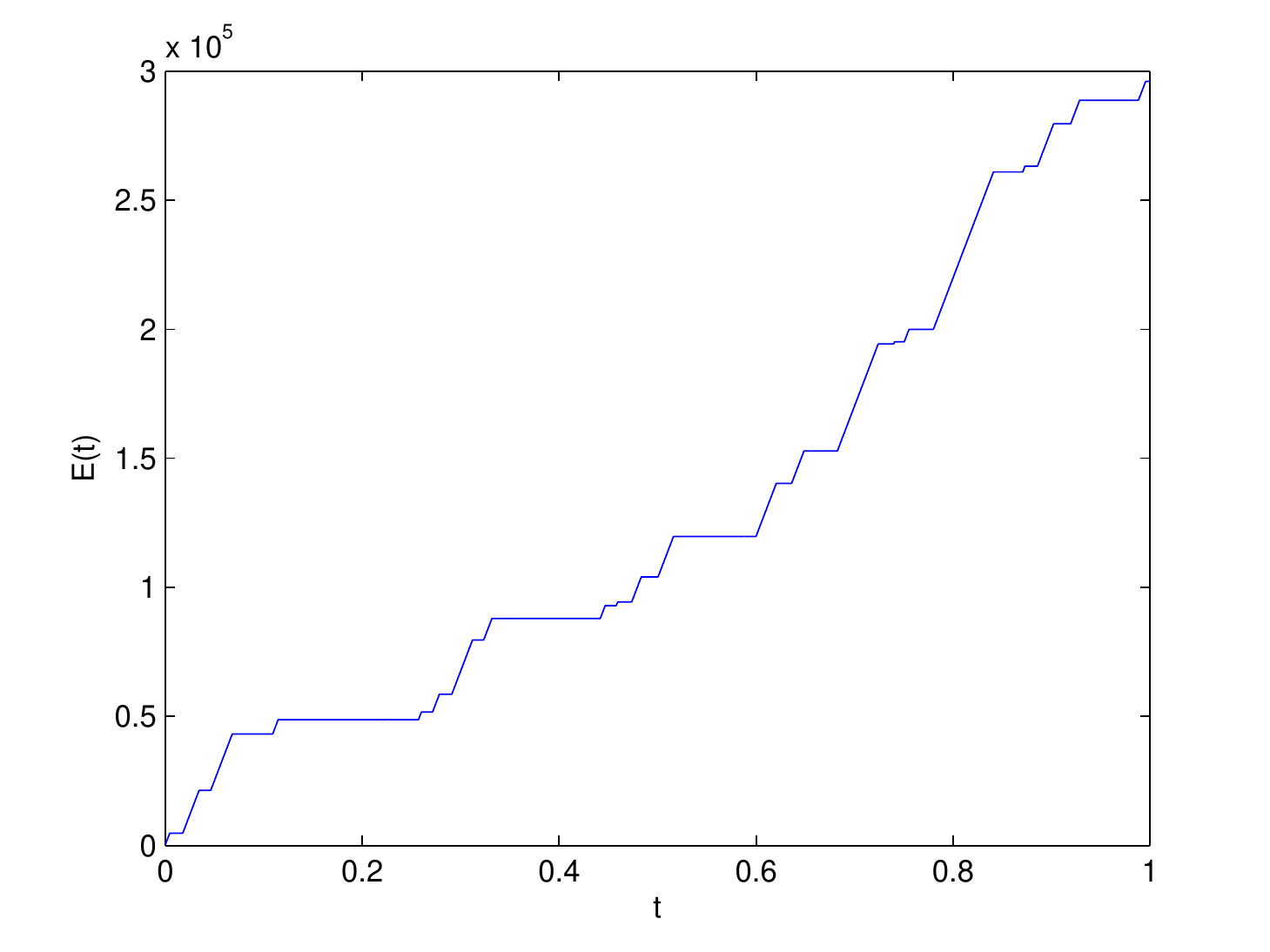}
  \end{minipage}
}
\subfigure[One path of $X(t)$ ]
{
  \begin{minipage}{4.5cm}
  \label{fig:pathxt}
  \centering
  \includegraphics[scale=0.36]{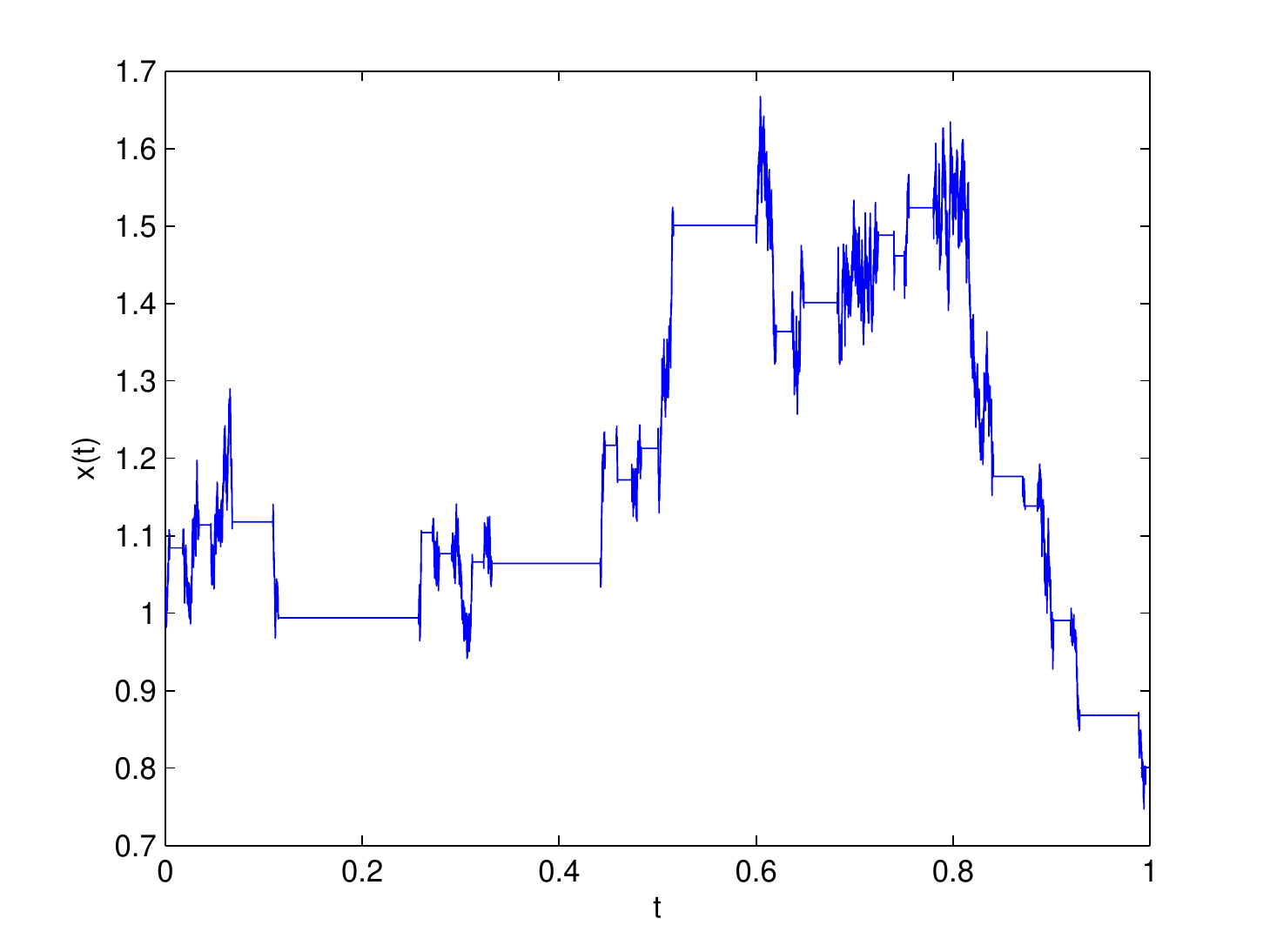}
  \end{minipage}
}
\caption{Numerical simulations of $D(t)$, $E(t)$ and $X(t)$}
\end{figure}

Now we illustrate the strong convergence and the convergence rate. Since the explicit form of the true solution to \eqref{expl:strcon} is hard to obtain. The numerical solution with a small step size, $h_0= 10^{-6}$, is regarded as the true solution. The step sizes of $h=10^{-2}$, $10^{-3}$ and $10^{-4}$ are used to calculated the numerical solutions. For the given step size $h$, the $L^1$ strong error is calculated by
\begin{equation*}
\frac{1}{N} \sum_{i=1}^{N} \left| y_i(E_{h_0}(T)) - y_i(E_h(T)) \right|.
\end{equation*}
\par
Two hundreds ($N=200$) sample paths are used to draw Loglog plot of the $L^1$ error against the step sizes in Figure \ref{Fig:strerr}. The red solid line is the reference line with the slope of 1/2. It can be seen that the strong convergence rate is approximately 1/2. A simple regression also shows that the rate is 0.4996, which is in line with the theoretical one.
\begin{figure}
\centering
  \includegraphics[scale=0.6]{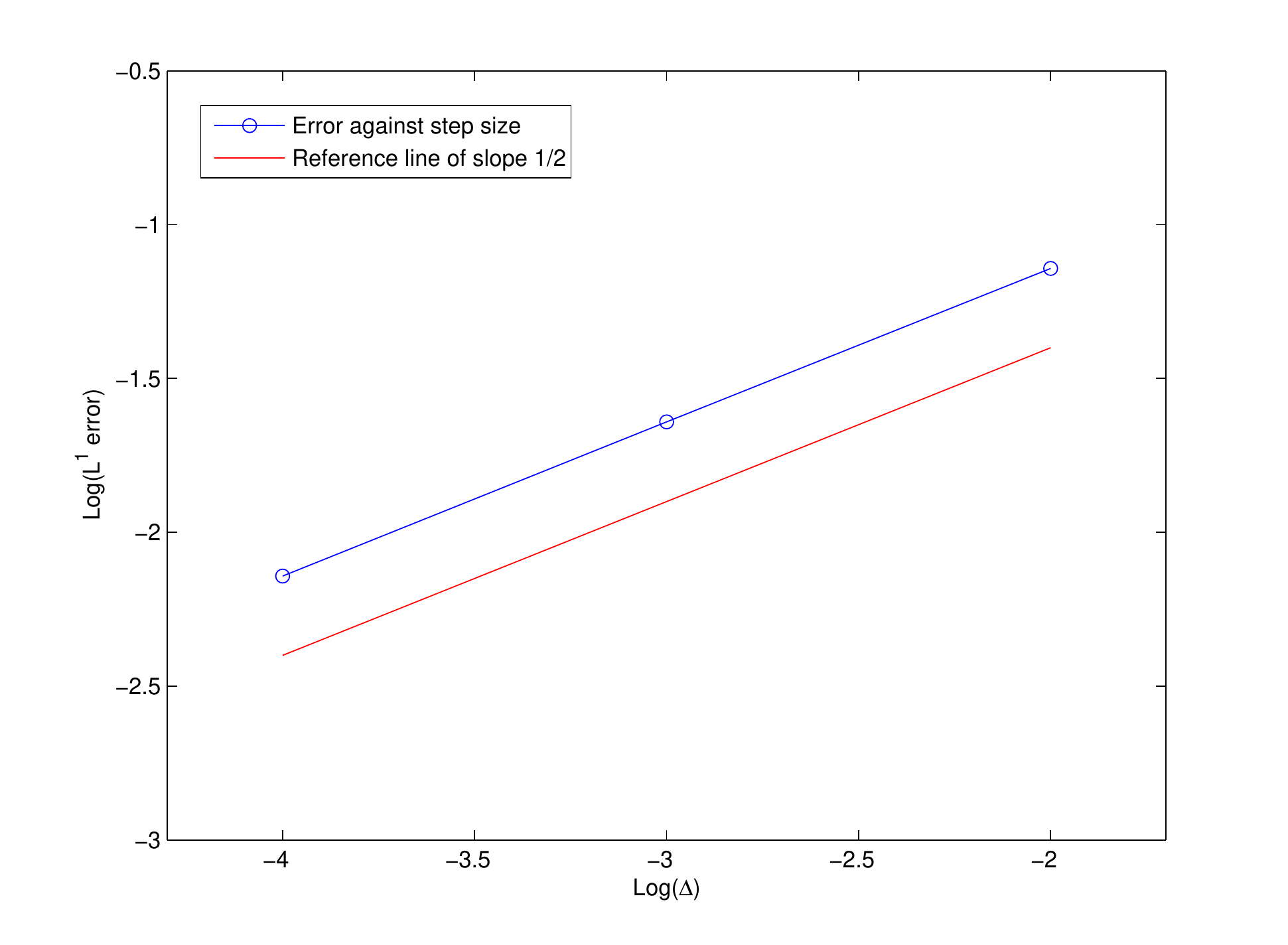}
\caption{The $L^1$ errors between the exact solution and the numerical solutions for step sizes $\Delta=10^{-2},~10^{-3},~10^{-4}$.}
\label{Fig:strerr}
\end{figure}

\begin{expl}
A one-dimensional nonlinear time-changed SDE
\begin{equation}\label{expl:mstab}
\dup X(t) = \left(-X(t) - X^3(t)\right)\,\dup E(t) + X(t) \,\dup B(E(t)), \quad\text{with $X(0)=5$},
\end{equation}
is considered.
\end{expl}
It is not hard to check that \eqref{cond:stab} is satisfied, thus the underlying time-changed SDE is stable in the mean square sense. In addition, \eqref{cond:numstab} holds for \eqref{expl:mstab} indicates the numerical solution is also mean square stable.
\par
One hundred paths are used to draw the mean square of the numerical solutions from $t=0$ to $t=10$. It is clear in Figure \ref{Fig:MSstab} that the second moments of the solution tends to $0$ as the time $t$ advances, which indicates the numerical solution is mean square stable. In addition, five sample paths are displayed in Figure \ref{fig:ASstab}.
\begin{figure}[htbp]
\centering
\subfigure[Mean square of $y(E_h(t))$]
{
  \begin{minipage}{7cm}
  \label{Fig:MSstab}
  \centering
  \includegraphics[scale=0.5]{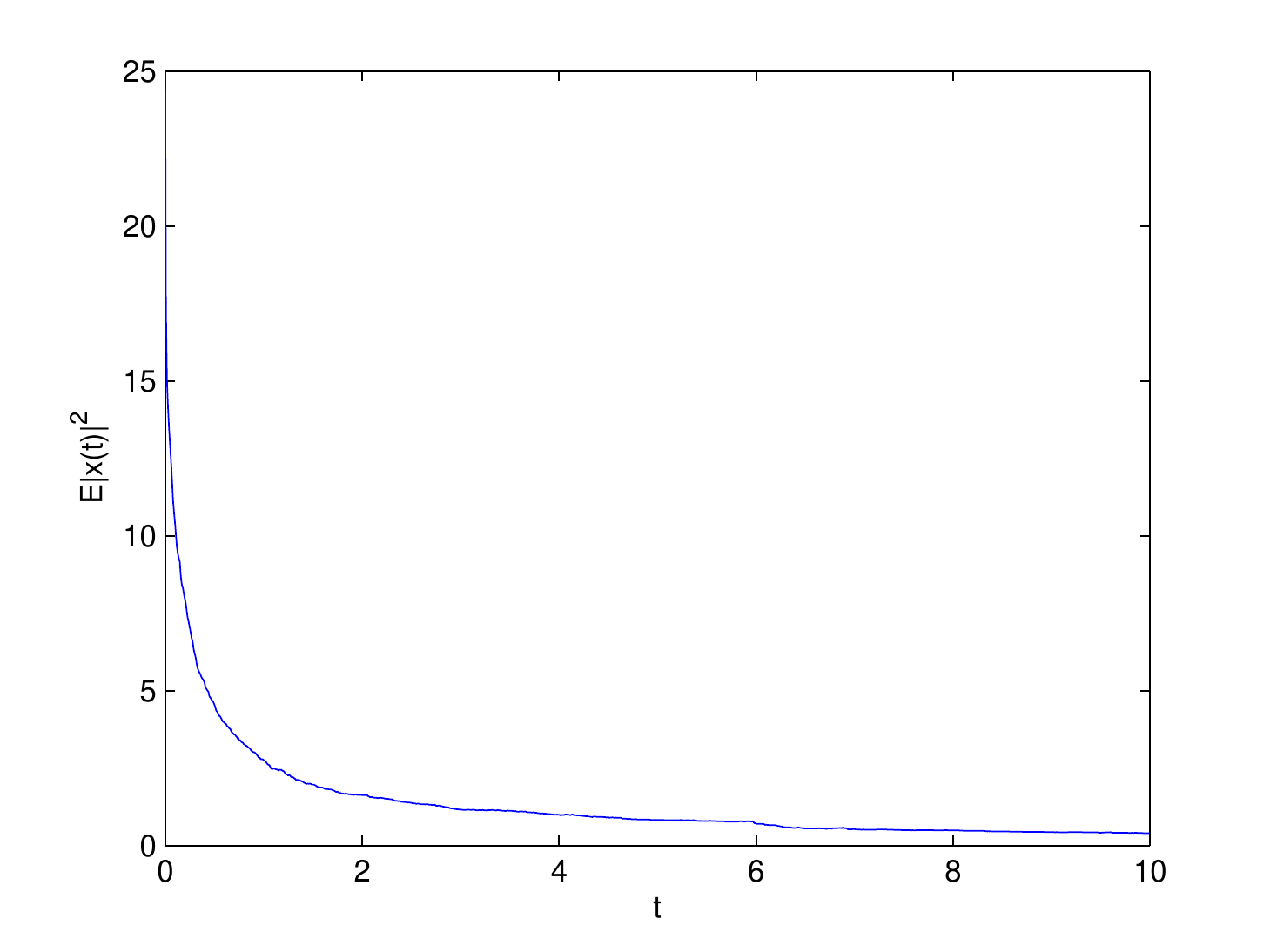}
  \end{minipage}
}
\subfigure[paths of $y(E_h(t))$]
{
  \begin{minipage}{7cm}
  \label{fig:ASstab}
  \centering
  \includegraphics[scale=0.5]{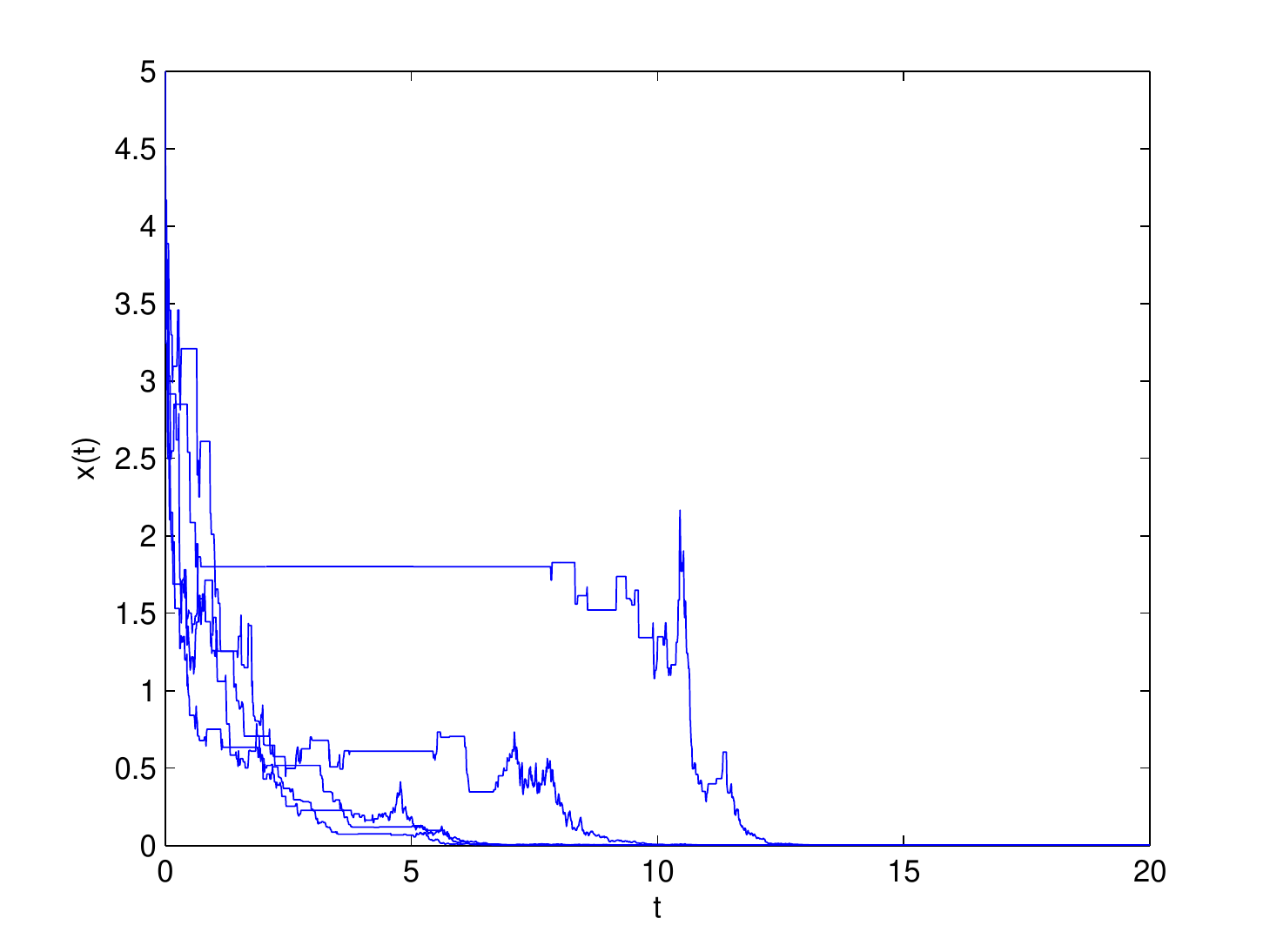}
  \end{minipage}
}
\caption{Stabilities of numerical solutions}
\end{figure}

\section*{Acknowledgement}
Chang-Song Deng would like to thank the National Natural Science Foundation of China (11401442, 11831015).
\par
Wei Liu would like to thank the National Natural Science Foundation of China (11701378, 11871343), ¡°Chenguang Program¡± supported by both Shanghai Education Development Foundation and Shanghai Municipal Education Commission (16CG50), and Shanghai Gaofeng \& Gaoyuan Project for University Academic Program Development, for their financial support.

\end{document}